\documentclass[a4paper,english,12pt]{article}

\usepackage{amsmath,amsfonts,amssymb,amsthm,bbm,graphics,epsfig,psfrag,epstopdf,dsfont,esint,mathtools,yhmath}
\usepackage{geometry}
\usepackage[active]{srcltx}
\usepackage{paralist,subfigure,float}
\usepackage{color,soul}
\usepackage[dvipsnames]{xcolor}
\usepackage{hyperref} 

\tolerance = 1500
\hoffset = 0pt
\voffset = 0pt
\textwidth = 460pt
\textheight = 690pt
\topmargin = 0pt
\headheight = -10pt
\headsep = 0pt
\oddsidemargin = -0pt
\evensidemargin = -0pt
\marginparwidth = 0pt
\marginparsep = 3pt
\pagenumbering{arabic}
\relpenalty=10000
\binoppenalty=10000
\setcounter{page}{1}
\setstcolor{red}
\setlength{\abovedisplayshortskip}{0pt}
\setlength{\belowdisplayshortskip}{0pt}
\setlength{\abovedisplayskip}{0pt}
\setlength{\belowdisplayskip}{0pt}

\newtheorem{theorem}{Theorem}[section]
\newtheorem{corollary}[theorem]{Corollary}

\newtheorem{proposition}[theorem]{Proposition}

\newcommand{\R}{\mathbb{R}}
\def\N{\mathbb{N}}
\def\Z{\mathbb{Z}}

\def\epsilon{\varepsilon}

\def\tilde{\widetilde}
\DeclareMathOperator\diam{diam}
\DeclareMathOperator{\dist}{dist}

\definecolor{aquamarine}{rgb}{0.13, 0.68, 0.8}

\newcommand{\be}{\begin{equation}}
\newcommand{\ee}{\end{equation}}
\newcommand{\baa}{\begin{array}}
\newcommand{\eaa}{\end{array}}
\newcommand{\ba}{\begin{eqnarray}}
\newcommand{\ea}{\end{eqnarray}}

\numberwithin{equation}{section}


\begin{document}
\date{}
\title{\bf{Propagation or extinction in bistable equations: the non-monotone role of initial fragmentation}}
\author{Matthieu Alfaro$^{\hbox{\small{ a}}}$, Fran\c cois Hamel$^{\hbox{\small{ b}}}$ and Lionel Roques$^{\hbox{\small{ c }}}$\thanks{This work has received funding from Excellence Initiative of Aix-Marseille Universit\'e~-~A*MIDEX, a French ``Investissements d'Avenir'' programme, from the French ANR RESISTE (ANR-18-CE45-0019), ReaCh (ANR-23-CE40-0023-02) and DEEV (ANR-20-CE40-0011-01) projects.}\\
\\
\footnotesize{$^{\hbox{a }}$Univ Rouen Normandie, LMRS, CNRS, Rouen, France}\\
\footnotesize{$^{\hbox{b }}$Aix Marseille Univ, CNRS, I2M, Marseille, France} \\
\footnotesize{$^{\hbox{c }}$INRAE, BioSP, 84914, Avignon, France}\\
}
\maketitle

\begin{center}
{\it To Professor Yihong Du, a distinguished scholar and esteemed mathematician}
\end{center}
	
\begin{abstract}
\noindent{}In this paper, we investigate the large-time behavior of bounded solutions of the Cauchy problem for a reaction-diffusion equation in $\R^N$ with bistable reaction term. We consider initial conditions that are chiefly indicator functions of bounded Borel sets. We examine how geometric transformations of the supports of these initial conditions affect the propagation or extinction of the solutions at large time. We also consider two fragmentation indices defined in the set of bounded Borel sets and we establish some propagation or extinction results when the initial supports are weakly or highly fragmented. Lastly, we show that the large-time dynamics of the solutions is not monotone with respect to the considered fragmentation indices, even for equimeasurable sets.
\vskip 2pt
\noindent{\small{\it{Keywords}}: Reaction-diffusion equations; invasion; extinction; fragmentation.}
\end{abstract}


\section{Introduction}\label{intro}

This paper is concerned with the large-time behavior of solutions to the Cauchy problem for the bistable reaction-diffusion equation
\be\label{homo}\left\{\baa{ll}
\displaystyle\frac{\partial u}{\partial t}=\Delta u+f(u), & t>0,\ x\in\R^N,\vspace{3pt}\\
u(0,x)=u_0(x), & x\in\R^N,\eaa\right.
\ee
in any dimension $N\ge1$, where $\Delta$ stands for the Laplacian with respect to the spatial variables $x\in\R^N$. The function $f:[0,1]\to\R$ is assumed to be of class $C^1$ and of the bistable type with positive mass, that~is,
\be\label{bistable}\left\{\baa{ll}
\displaystyle f(0)=f(1)=0,\ \ f'(0)<0,\ \ f'(1)<0,\ \ \int_0^1f(s)ds>0,\vspace{3pt}\\
\exists\,\theta\in(0,1),\ \ f<0\hbox{ in }(0,\theta),\ f>0\hbox{ in }(\theta,1),\ f'(\theta)>0.\eaa\right.
\ee

The initial conditions $u_0:\R\to[0,1]$ are assumed to be Lebesgue-measurable and compactly supported. The Cauchy problem~\eqref{homo} is well posed and, for each such $u_0$, the solution $u:[0,+\infty)\times\R^N\to[0,1]$ exists and is unique, it is classical in $(0,+\infty)\times\R^N$ and $u(t,\cdot)\to u_0$ as $t\to0^+$ in $L^1(\R^N)$. Furthermore, from the strong parabolic maximum principle, one has $0<u(t,x)<1$ for all $(t,x)\in(0,+\infty)\times\R^N$ provided that $\|u_0\|_{L^1(\R^N)}>0$. In biological or ecological models, the quantity $u$ stands for the normalized concentration of a species, subject to local dispersion on the one hand, and on growth and death processes taking into account a strong Allee effect on the other hand, meaning that the per capita growth rate $f(u)/u$ (hence $f(u)$ itself) is negative at low densities.

\subsubsection*{Some notations}

Throughout the paper, ``$\|\ \|_{\infty}$", ``$|\ |$'' and ``$\ \cdot\ $'' denote respectively the maximum norm, the Euclidean norm and the Euclidean inner product in $\R^N$, $B_r(x):=\{y\in\R^N:|y-x|<r\}$ denotes the open Euclidean ball of center $x\in\R^N$ and radius $r\ge0$, and $B_r:=B_r(0)$. We call
$$\mathcal{B}:=\big\{B_r(x):x\in\R^N,\ r>0\big\}.$$
We denote $\lambda$ the Lebesgue measure in $\R^N$, and $\omega_N:=\lambda(B_1)$. For $x\in\R^N$ and a Borel subset $E$ of $\R^N$, we define the essential distance between $x$ and $E$ as
$$\dist(x,E):=\mathrm{essinf}\big(y\in E\mapsto|x-y|\big)=\sup\big\{r\ge0:\lambda(B_r(x)\cap E)=0\big\}$$
(hence $\dist(x,E)=+\infty$ if $\lambda(E)=0$, and the supremum is a maximum if $\lambda(E)>0$). The function $x\mapsto\mathrm{dist}(x,E)$ is then Lipschitz continuous in $\R^N$. The~$d_1$-distance between two bounded Borel subsets $E$ and $F$ of $\R^N$ is given by the Lebesgue measure of the symmetric difference~$E\Delta F$, that is,
\be\label{defd1}
d_1(E,F):=\lambda\big((E\cup F)\setminus(E\cap F)\big)=\lambda(E\cup F)-\lambda(E\cap F),
\ee
which is also the $L^1$ norm of the difference between the indicator functions of the sets $E$ and $F$. The essential Hausdorff distance between two bounded Borel subsets $E$ and $F$ of~$\R^N$ is given by
\be\label{defdH}
d_{H}(E,F):=\max\Big(\|\dist(\cdot,F)\|_{L^\infty(E)},\|\dist(\cdot,E)\|_{L^\infty(F)}\Big),
\ee
with the conventions $d_{H}(E,F)=d_{H}(F,E)=+\infty$ if $\lambda(E)>0=\lambda(F)$, and $d_{H}(E,F)=0$ if $\lambda(E)=\lambda(F)=0$. Throughout the paper, for two Borel subsets $E$ and $F$ of $\R^N$, we also say that $E$ is included in $F$ up to a negligible set, or equivalently $F$ contains $E$ up to a negligible set, if $\lambda(E\setminus F)=0$. We say that $E$ and $F$ are equal up to a negligible set if $E\subset F$ up to a negligible set and $F\subset E$ up to a negligible set, that is, the indicator functions of the sets~$E$ and $F$ are equal almost everywhere in $\R^N$. In other words, for any two bounded Borel subsets $E$ and $F$ of $\R^N$, $d_1(E,F)=0$ if and only if $E$ and $F$ are equal up to a negligible set. Lastly, for a bounded Borel set $E\subset\R^N$ and for $r>0$, we call
\be\label{defB1}
\mathbb{B}_r(E):=\big\{F\hbox{ bounded Borel subsets of $\R^N$}: d_1(E,F)<r\big\}.
\ee

\subsubsection*{Some results of the literature}

The main goal of the paper is to understand the role of the fragmentation, in a sense to be made precise, of the initial condition $u_0$ on the large-time dynamics of the solution~$u$. But before presenting various notions of fragmentation and their properties and effects on the dynamics of~\eqref{homo}, let us recall some important known results of the literature.

First of all, it turns out that, for each given compactly supported initial condition $u_0:\R^N\to[0,1]$, $u$ can have only three types of asymptotic behaviors as $t\to+\infty$. Namely, it follows from~\cite{p1} that either $u(t,\cdot)\to0$ as $t\to+\infty$ uniformly in $\R^N$ (the extinction case), or $u(t,\cdot)\to 1$ as $t\to+\infty$ locally uniformly in $\R^N$ (the invasion case), or $u(t,\cdot)\to\Phi(\cdot+x_0)$ as $t\to+\infty$ uniformly in $\R^N$ for some $x_0\in\R^N$, where $\Phi:\R^N\to(0,1)$ is the unique stationary solution of~\eqref{homo} converging to $0$ at infinity and such that $\max_{\R^N}\Phi=\Phi(0)$ (the function~$\Phi$ is actually radially symmetric and decreasing with respect to the origin). We point out that, if $f$ satisfies~\eqref{bistable} but with a nonpositive integral over $[0,1]$, then only the extinction case is possible. Under the full assumption~\eqref{bistable}, when $u(t,\cdot)\to1$ as $t\to+\infty$ locally uniformly in $\R^N$, then it is also known from~\cite{fm} (in dimension $N=1$) and from~\cite{u2} (in any dimension $N\ge1$) that
$$\liminf_{t\to+\infty}\Big(\min_{|x|\le ct-\frac{N-1}{c}\ln t-A}u(t,x)\Big)\mathop{\longrightarrow}_{A\to+\infty}1\ \hbox{ and }\ \limsup_{t\to+\infty}\Big(\max_{|x|\ge ct-\frac{N-1}{c}\ln t+A}u(t,x)\Big)\mathop{\longrightarrow}_{A\to+\infty}0,$$
where $c>0$ is the unique speed of a traveling front $\varphi(x-ct)$ solving~\eqref{homo} in dimension~$N=1$ and connecting $1$ to $0$, that is, with $\varphi:\R\to(0,1)$ such that~$\varphi(-\infty)=1$ and~$\varphi(+\infty)=0$. In other words, the levels sets of $u$ with a given level $\rho\in(0,1)$ spread with speed $c$ in all directions as $t\to+\infty$ (this spreading result was originally proved in~\cite{aw}) and are even located at bounded Hausdorff distance from spheres of radii $ct-((N-1)/c)\ln t$. As a matter of fact, the level sets become asymptotically locally planar as $t\to+\infty$, since~$\sup_{t\ge A,\,|x|\ge A}\nabla u(t,x)/|\nabla u(t,x)|+x/|x|\to0$ as $A\to+\infty$ by~\cite{j} (see~\cite{hr2,hr3} for a study of further flattening properties), but they nevertheless do not necessarily converge to families of spheres, see~\cite{rr3,r1,r3}. For further spreading properties and estimates of the location of the level sets for various types of functions $f$, we refer to e.g.~\cite{dm2,d,g,rrr} in the case of compactly supported initial conditions, and to~\cite{b,f,hnrr,kpp,l,nrr,p4,u1} for initial conditions with unbounded initial support in dimension $N=1$ and~\cite{hr1,hr2,hr3,mn,mnt,p3,rr1,rr2} in any dimension.

Moreover, if for the problem~\eqref{homo}-\eqref{bistable} one considers a family $[0,+\infty)\ni\sigma\mapsto u^\sigma_0$ of compactly supported initial conditions, which is continuous and increasing in the $L^1(\R^N)$ sense and which is such that $u^0_0=0$, there is a unique threshold $\sigma^*\in(0,+\infty]$ such that the solutions $u^\sigma$ of~\eqref{homo} with initial conditions $u^\sigma_0$ satisfy: 1)~$u^\sigma(t,\cdot)\to0$ as $t\to+\infty$ uniformly in $\R^N$ if $0\le\sigma<\sigma^*$, 2)~$u^\sigma(t,\cdot)\to1$ as $t\to+\infty$ locally uniformly in $\R^N$ if~$\sigma^*<+\infty$ and $\sigma>\sigma^*$, 3)~$u^{\sigma^*}(t,\cdot)\to\Phi(\cdot+x_0)$ as $t\to+\infty$ uniformly in $\R^N$ for some~$x_0\in\R^N$ if $\sigma^*<+\infty$, see~\cite{p1}. In particular, for any given $\alpha\in(\theta,1]$, since the solutions $u^{\alpha,R}$ of~\eqref{homo} with initial conditions
$$u_0^{\alpha,R}=\alpha\,\mathds{1}_{B_R}:=\left\{\baa{ll}\alpha &\hbox{if }|x|<R,\vspace{3pt}\\ 0&\hbox{if }|x|\ge R,\eaa\right.$$
converge to $1$ as $t\to+\infty$ locally uniformly in $\R^N$ provided $R>0$ is large enough (see e.g.~\cite{aw,dp}), there is then from~\cite{mz2} or~\cite{p1} a unique $R_{\alpha}\in(0,+\infty)$ such that 
\be\label{defRalpha}\left\{\baa{ll}
u^{\alpha,R}\to0\hbox{ as $t\to+\infty$ uniformly in $\R^N$} & \hbox{if $0\le R<R_{\alpha}$},\vspace{3pt}\\
u^{\alpha,R}\to1\hbox{ as $t\to+\infty$ locally uniformly in $\R^N$} & \hbox{if $R>R_{\alpha}$},\vspace{3pt}\\
u^{\alpha,R}\to\Phi\hbox{ as $t\to+\infty$ uniformly in $\R^N$} & \hbox{if $R=R_{\alpha}$}.\eaa\right.
\ee
The asymptotic behavior of~$R_{\alpha}$ as $\alpha\displaystyle{\mathop{\to}^>}\theta$ was estimated in~\cite{adf}. The first sharp threshold results of that type were obtained in dimension $N=1$, in~\cite{z} with initial conditions that are indicator functions of bounded intervals, and then in~\cite{dm1} for more general initial conditions. In both papers~\cite{dm1,z}, more general functions $f$ can be considered (such as ignition nonlinearities for which there is $\theta\in(0,1)$ such that $f=0$ in $[0,\theta]\cup\{1\}$, $f>0$ in $(\theta,1)$ and $f$ is non-decreasing in a neighborhood of $\theta$). We point out that reference~\cite{dm1} also contains a general convergence result for the bounded nonnegative solutions of~\eqref{homo} with compactly supported initial conditions in dimension $N=1$ under the sole assumption $f(0)=0$: namely such solutions converge as $t\to+\infty$ locally in~$\R$ to a stationary solution, which is either constant or even and decreasing with respect to a point. Other sharp threshold results were obtained in~\cite{mp1,p1} for bistable-type autonomous or non-autonomous equations in~$\R$ or~$\R^N$ with initial conditions converging to $0$ at infinity, and in~\cite{mz1,mz2} for more general functions~$f$ and radially symmetric non-increasing initial conditions in $L^2(\R)$ and $L^2(\R^N)$. Earlier references~\cite{aw,k,lk} also established non-sharp extinction/invasion results with respect to the size or the amplitude of the initial condition~$u_0$ in $\R^N$ for various reaction terms $f$. We refer to~\cite{dm1,dp,mp1,mp2,p2} for further results on the convergence to a stationary solution or the convergence to the set of stationary solutions (quasiconvergence) for various equations of the type~\eqref{homo}. Extinction and invasion results have also been derived for equations of the type~\eqref{homo} in general unbounded domains~$\Omega$, see~\cite{bhn,r2}. We lastly mention~\cite{ahk,mntm,ntm} some results on the optimization of $\int_\Omega u(T,\cdot)$ (or other integral quantities) with respect to the initial condition under some pointwise and integral constraints, for some reaction-diffusion equations set in bounded domains $\Omega$ with Neumann boundary conditions on $\partial\Omega$.

\subsubsection*{Main goal and outline of the paper}

In this paper, we consider the Cauchy problem~\eqref{homo} with a bistable function $f$ of the type~\eqref{bistable}, and with initial conditions which are mostly indicator functions of bounded Borel sets $E$, that is,
\be\label{defu0}
u_0(x)=\mathds{1}_E(x):=\left\{\baa{ll}1&\hbox{if }x\in E,\vspace{3pt}\\ 0&\hbox{if }x\in\R^N\!\setminus\!E.\eaa\right.
\ee
Our goal is to understand the effect of the fragmentation of the initial set $E$ on the large-time dynamics of the solution of~\eqref{homo} with initial condition~\eqref{defu0}. Unlike the aforementioned sharp threshold results obtained for monotone families of initial conditions, that would correspond in~\eqref{defu0} to monotone (with respect to the inclusion) families of sets $E$, we typically consider here some Borel sets $E$ having the same Lebesgue measure (hence being in general not comparable with respect to the inclusion) and we look for some properties of these sets which guarantee the extinction or the invasion of the solutions~$u$. As we will see, the large-time dynamics will strongly depend on the fragmentation of the sets $E$. Two fragmentation indices are considered and their properties are discussed in Section~\ref{sec2}. The main results on the role of the fragmentation of $E$ on the large-time dynamics of the solutions of~\eqref{homo} and~\eqref{defu0} are stated in Section~\ref{sec3}. We especially show that there is no monotonicity of the large-time dynamics with respect to the fragmentation indices in the class of equimeasurable sets. The main proofs are given in Section~\ref{sec4}.


\section{Fragmentation indices and their properties}\label{sec2}

For the solutions $u$ of problem~\eqref{homo}-\eqref{bistable} with initial conditions of the type~\eqref{defu0} with $E=B_R$, there is a unique $R_{1}\in(0,+\infty)$ such that extinction holds as $t\to+\infty$ if~$R<R_{1}$ while invasion happens if $R>R_{1}$, as recalled in Section~\ref{intro}, see~\eqref{defRalpha}. Now, for a set~$E$ with Lebesgue measure $\lambda(E)$ larger than $\lambda(B_{R_{1}})$, can one provide some conditions guaranteeing the extinction or the invasion? The invasion obviously holds if $E$ itself is a ball or if $E$ contains a ball of radius larger than $R_{1}$, from the maximum principle. But the invasion does not hold in general. For instance, in dimension $N=1$, it follows from~\cite[Theorem~2]{grh} that, if
$$D_a:=(-a-r,-a)\cup(a,a+r)$$
with a given $r\in(0,2R_{1})$, then the solutions $u^a$ of~\eqref{homo} with initial conditions $\mathds{1}_{D_a}$ go to extinction as $t\to+\infty$ for all $a>0$ large enough, while~$\lambda(D_a)=2r>2R_{1}=\lambda(B_{R_{1}})$ if $r$ is chosen such that $r>R_{1}$. 

In order to quantify how a bounded non-negligible Borel set $E$ deviates from the set of balls having the same measure as $E$, we consider in this paper two different fragmentation indices. We then list their main properties and compare them. For such a set $E$, we define
$$R_E:=\Big(\frac{\lambda(E)}{\omega_N}\Big)^{1/N}=\Big(\frac{\lambda(E)}{\lambda(B_1)}\Big)^{1/N}$$
which is nothing but the radius of all equimeasurable balls. We then denote $\rho_E$ the smallest radius of a ball containing~$E$ up to a negligible set, which can also be expressed as
\be\label{defrhoE}
\rho_E=\min_{x\in\R^N}\|d(x,\cdot)\|_{L^\infty(E)}.
\ee
Notice that
\be\label{rhodiam}
\rho_E\ge\frac{\diam(E)}{2}
\ee
for any $N\ge1$, with equality when $N=1$, where
$$\diam(E):=\|\,(x,y)\mapsto|x-y|\,\|_{L^\infty(E\times E)}$$
is the essential diameter of $E$. There is actually a unique $x_E$ such that $E\subset B_{\rho_E}(x_E)$ up to a negligible set. Lastly,
$$R_E\le\rho_E$$
and the equality holds if and only if $E$ is a ball, up to a negligible set.

\subsubsection*{The fragmentation index $\delta_1$}

The first considered fragmentation index of a bounded non-negligible Borel set $E\subset\R^N$ is, up to the multiplicative factor $1/2$, the Fraenkel asymmetry. It is based on the $L^1$ distance between $E$ and the equimeasurable balls:
$$\delta_1(E):=\inf_{B\in\mathcal{B},\,\lambda(B)=\lambda(E)}\frac{\|\mathds{1}_B-\mathds{1}_E\|_{L^1(\R^N)}}{2\lambda(E)}.$$
Since $\|\mathds{1}_B-\mathds{1}_E\|_{L^1(\R^N)}=\lambda(B\Delta E)=2\lambda(E)-2\lambda(E\cap B_{R_E}(x))=2\lambda(E\!\setminus\!B_{R_E}(x))$ for every ball $B=B_{R_E}(x)$ (namely, such that $\lambda(B)=\lambda(E)$), it follows from the continuity of the maps $x\mapsto\lambda(E\!\setminus\!B_{R_E}(x))$ and $x\mapsto\lambda(E\cap B_{R_E}(x))$ in~$\R^N$ that
\be\label{defdelta1}\baa{rcl}
\delta_1(E)=\displaystyle\min_{x\in\R^N}\frac{\lambda(E\!\setminus\!B_{R_E}(x))}{\lambda(E)} & = & \displaystyle\min_{x\in\R^N}\frac{\lambda(E)-\lambda(E\cap B_{R_E}(x))}{\lambda(E)}\vspace{3pt}\\
& = & \displaystyle1-\max_{x\in\R^N}\frac{\lambda(E\cap B_{R_E}(x))}{\lambda(E)}.\eaa
\ee
In particular,
$$0\le\delta_1(E)<1,$$
and $\delta(E)=0$ if and only if $E$ is a ball, up to a negligible set. Furthermore, the constant~$1$ in the inequality $\delta_1(E)<1$ is optimal: for instance, the non-negligible Borel sets
\be\label{defEn}
E_n:=\bigcup_{x\in\Z^N\cap(0,n)^N}B_{1/n^2}\Big(\frac{x}{n}\Big),\ \ n\ge 1,
\ee
which are all included into the cube $(0,1)^N$, are highly fragmented for the index $\delta_1$ as $n\to+\infty$, in the sense that
\be\label{delta1En}
\delta_1(E_n)\to1\ \hbox{ as $n\to+\infty$}.
\ee
Lastly, for any $\nu\in(0,1)$, consider the non-negligible Borel sets
\be\label{defFn}
F_n:=\bigcup_{x\in\Z^N\cap B_n}\Big[\frac{x}{n}+\Big(\!\!-\!\frac{\nu}{2n},\frac{\nu}{2n}\Big)^N\Big]=\bigcup_{y\in (\Z^N\!/n)\cap B_1}\Big[y+\Big(\!\!-\!\frac{\nu}{2n},\frac{\nu}{2n}\Big)^N\Big],\ \ n\ge 1,
\ee
which are all included into the ball $B_{1+\nu\sqrt{N}/2}$. Actually, counting the number $A_n$ of points $x\in\Z^N\cap B_n$ corresponds to the $N-$dimensional extension of Gauss circle problem. It is a difficult issue, but some estimates can be obtained when $n$ is large. In particular, $A_n=n^N\, \omega_N + O(n^{N-1})$ as $n\to+\infty$, see \cite[Chapter~4]{Kra89} for sharper estimates. Thus, $\lambda(F_n)=A_n\,(\nu/n)^N\to\nu^N\, \omega_N $ as $n\to +\infty$. The same arguments imply that $\lambda(F_n \cap B_\nu)\to\nu^{2N}\,\omega_N$ as $n\to +\infty$, so that
$$\delta_1(F_n)\to1-\nu^N\ \hbox{ as $n\to+\infty$}.$$

\subsubsection*{The fragmentation index $\delta_H$}

The second considered fragmentation index of a bounded non-negligible Borel set~$E\subset\R^N$ is based on the essential Hausdorff distance between~$E$ and the equimeasurable balls:
$$\delta_H(E):=\inf_{B\in\mathcal{B},\,\lambda(B)=\lambda(E)}\frac{d_{H}(E,B)}{\rho_E+R_E}.$$
From the continuity of the map $x\mapsto d_{H}(E,B_{R_E}(x))$ in $\R^N$, one can also write
\be\label{defdeltaH}
\delta_H(E)=\min_{x\in\R^N}\frac{d_{H}(E,B_{R_E}(x))}{\rho_E+R_E}.
\ee
Furthermore, for the unique $x_E\in\R^N$ such that $E\subset B_{\rho_E}(x_E)$ up to a negligible set, one has $\|\dist(\cdot,B_{R_E}(x_E))\|_{L^\infty(E)}\le\rho_E-R_E<\rho_E+R_E$. Moreover, $\|\dist(\cdot,E)\|_{L^\infty(B_{R_E}(x_E))}<\rho_E+R_E$ (indeed, otherwise, since the map $\dist(\cdot,E)$ is continuous in $\R^N$, there would exist a point $y_E\in\overline{B_{R_E}(x_E)}$ such that $\dist(y_E,E)\ge\rho_E+R_E$, hence $E\subset\R^N\setminus B_{\rho_E+R_E}(y_E)$ up to a negligible set, and then $E\subset\R^N\setminus B_{\rho_E}(x_E)$ up to a negligible set from the triangle inequality, a contradiction with the positivity of~$\lambda(E)$ and the inclusion $E\subset B_{\rho_E}(x_E)$ up to a negligible set). Finally, the inequality $\|\dist(\cdot,E)\|_{L^\infty(B_{R_E}(x_E))}<\rho_E+R_E$ has been proved, hence $d_{H}(E,B_{R_E}(x_E))<\rho_E+R_E$ and
$$0\le\delta_H(E)<1.$$
As for the index $\delta_1$, a bounded non-negligible Borel $E$ satisfies $\delta_{H}(E)=0$ if and only if $E$ is a ball, up to a negligible set. Furthermore, the constant $1$ in the inequality $\delta_{H}(E)<1$ is optimal: for instance, the sets $E_n$ defined in~\eqref{defEn} satisfy $\delta_H(E_n)\to1$ as $n\to+\infty$. Similarly, taking two points $x\neq y$ in $\R^N$, the non-negligible Borel sets
\be\label{defGn}
G_n:=B_{1/n}(x)\cup B_{1/n}(y),\ \ n\ge1,
\ee
which are all included in the ball $B_{|x-y|/2+1}((x+y)/2)$, are highly fragmented for the index~$\delta_H$ as $n\to+\infty$, in the sense that
$$\delta_H(G_n)\to1\ \hbox{ as $n\to+\infty$}.$$
Observe on the other hand that these sets are not highly fragmented for the index~$\delta_1$, since $\delta_1(G_n)\to1/2$ as $n\to+\infty$. Lastly, for $\nu\in(0,1)$, the sets $F_n$ defined by~\eqref{defFn} are such that
$$\delta_H(F_n)\to\frac{1-\nu}{1+\nu}\ \hbox{ as $n\to+\infty$}.$$

In dimension $N=1$, for a bounded non-negligible Borel set $E$, calling
$$m_E:=\mathrm{essinf}\,E\ \hbox{ and }M_E:=\mathrm{esssup}\,E,$$
one has $m_E<M_E\in\R$,  $x_E=(m_E+M_E)/2$, and $\diam(E)=M_E-m_E=2\rho_E\ge2R_E$. Moreover, $\|\dist(\cdot,(x-R_E,x+R_E))\|_{L^\infty(E)}\ge\rho_E-R_E$ for every $x\in\R$, with equa\-lity if and only if $x=x_E$, while $\|\dist(\cdot,E)\|_{L^\infty(x_E-R_E,x_E+R_E)}\le\rho_E-R_E$ (otherwise, there would be $y_E\in[x_E-R_E,x_E+R_E]$ such that $d:=\dist(y_E,E)>\rho_E-R_E$ and $E\subset(x_E-\rho_E,x_E+\rho_E)\setminus(y_E-d,y_E+d)$ up to a negligible set, a contradiction with~$|y_E-x_E|\le R_E$ and $\lambda(E)=2R_E$). Finally, $d_{H}(E,(x-R_E,x+R_E))\ge\rho_E-R_E$ for each~$x\in\R$, with equality if and only if $x=x_E$, hence
$$\delta_H(E)=\frac{\rho_E-R_E}{\rho_E+R_E}\ \hbox{ in dimension $N=1$}.$$

Furthermore, in any dimension $N\ge1$,~\eqref{defrhoE} implies that $\|d(x,\cdot)\|_{L^\infty(E)}\ge\rho_E$ for any $x\in\R^N$, hence $d_H(E,B_{R_E}(x))\ge\|d(\cdot,B_{R_E}(x))\|_{L^\infty(E)}\ge\rho_E-R_E$ and
\be\label{ineqdeltaH}
\delta_H(E)\ge\frac{\rho_E-R_E}{\rho_E+R_E}\ \hbox{ in any dimension $N\ge1$},
\ee
by~\eqref{defdeltaH}. Notice that, unlike the case $N=1$, the inequality~\eqref{ineqdeltaH} is in general strict in dimensions $N\ge2$. For instance, with $N\ge2$, pick any $a>0$ and consider the spherical shell
$$E:=B_{(a^N+1)^{1/N}}\setminus B_a.$$
For this set, one has $R_E=1$, $\rho_E=(a^N+1)^{1/N}$, and, since $a>(a^N+1)^{1/N}-1$ (because $N\ge 2$), it follows that
$$\min_{x\in\R^N}d_H(E,B_1(x))=d_H(E,B_1)=a>(a^N+1)^{1/N}-1=\rho_E-R_E,$$
hence $\delta_H(E)=a/(\rho_E+R_E)>(\rho_E-R_E)/(\rho_E+R_E)$.

An important consequence of~\eqref{ineqdeltaH} is that $\delta_H(E)\to1$ if $\rho_E/R_E\to+\infty$, as for the sets $E_n$ and $G_n$ defined in~\eqref{defEn} and~\eqref{defGn}, for which $\rho_{E_n}/R_{E_n}\sim n\sqrt{N}/2$ and $\rho_{G_n}/R_{G_n}\sim n|x-y|/2^{1+1/N}$ as $n\to+\infty$.

\subsubsection*{Some properties of these indices}

Let us now list some further properties satisfied by the fragmentation indices~$\delta_1$ and~$\delta_H$. First of all, they are clearly invariant by rigid motion. They are also invariant by contraction or dilation, that is,
$$\delta_1(\mu E)=\delta_1(E)\ \hbox{ and }\ \delta_H(\mu E)=\delta_H(E)$$
for every bounded non-negligible Borel set $E$ and for every $\mu>0$. 

Furthermore, the comparisons between $\delta_1$ and $\delta_H$, respectively between $1-\delta_1$ and~$1-\delta_H$, are summarized in the following proposition. In short, weakly fragmented sets for the index~$\delta_H$ are also weakly fragmented for $\delta_1$, and highly fragmented sets for the index~$\delta_1$ are also highly fragmented for $\delta_H$, whereas the reverse comparisons are false.

\begin{proposition}\label{pro1} \begin{itemize}
\item[(i)] There is a constant $\gamma>0$ such that
\be\label{compar1}
0\le\delta_1(E)\le\gamma\,\delta_H(E)
\ee
for every bounded non-negligible Borel set $E$. In particular, $\delta_1(E)\to0$ as $\delta_H(E)\to0$.
\item[(ii)] On the other hand,
$$\delta_H(E)\not\to0\ \hbox{ as $\delta_1(E)\to0$}.$$
\item[(iii)] There is a constant $\eta>0$ such that
\be\label{compar2}
0<1-\delta_H(E)\le\eta\,(1-\delta_1(E))^{1/N}
\ee
for every bounded non-negligible Borel set $E$. In particular, $\delta_H(E)\to1$ as $\delta_1(E)\to1$.
\item[(iv)] On the other hand,
$$1-\delta_1(E)\not\to0\ \hbox{ as $1-\delta_H(E)\to0$}.$$
\end{itemize}
\end{proposition}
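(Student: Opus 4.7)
The plan is to prove the quantitative inequalities (i) and (iii) by using, for each of $\delta_1$ and $\delta_H$, the ball realizing the optimum in the competing definition as a test competitor, and to dispose of the non-implications (ii) and (iv) via explicit families whose behavior decouples the two indices. The main obstacle is item (iii): the needed lower bound on $\rho_E/R_E$ in terms of $1-\delta_1(E)$ must come from a covering–pigeonhole argument, and it is that step which accounts for the dimensional exponent $1/N$ appearing in~\eqref{compar2}.

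For (i), I pick a center $x_H\in\R^N$ attaining the minimum in~\eqref{defdeltaH} and set $D:=d_H(E,B_{R_E}(x_H))=\delta_H(E)(\rho_E+R_E)$. The bound $\|\dist(\cdot,B_{R_E}(x_H))\|_{L^\infty(E)}\le D$ implies $E\subset B_{R_E+D}(x_H)$ up to a negligible set, so $B_{R_E}(x_H)$ is an admissible competitor in~\eqref{defdelta1} and
\[
\delta_1(E)\le\frac{\lambda(E\setminus B_{R_E}(x_H))}{\lambda(E)}\le\frac{(R_E+D)^N-R_E^N}{R_E^N}=\Bigl(1+\frac{D}{R_E}\Bigr)^N-1.
\]
Rearranging~\eqref{ineqdeltaH} gives $\rho_E/R_E\le(1+\delta_H(E))/(1-\delta_H(E))$, whence $D/R_E=\delta_H(E)(\rho_E/R_E+1)\le 2\delta_H(E)/(1-\delta_H(E))$. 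A brief case split between $\delta_H(E)\le 1/3$ (where the convexity bound $(1+s)^N-1\le(2^N-1)s$ on $[0,1]$ is linear in $\delta_H$) and $\delta_H(E)>1/3$ (where $\delta_1(E)<1\le 3\delta_H(E)$ suffices) then yields the constant $\gamma:=3(2^N-1)$.

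For (ii), I fix a unit vector $v\in\R^N$ and set $E_n:=B_1\cup B_{1/n}(nv)$. Then $R_{E_n}=(1+n^{-N})^{1/N}\ge 1$, and plugging the ball $B_{R_{E_n}}(0)$ (which contains $B_1$ but misses the tiny far-away component as soon as $n\ge 2$) into~\eqref{defdelta1} gives $\delta_1(E_n)\le n^{-N}/(1+n^{-N})\to 0$. On the other hand $\rho_{E_n}/R_{E_n}\to+\infty$, so~\eqref{ineqdeltaH} forces $\delta_H(E_n)\to 1$, hence $\delta_H(E_n)\not\to 0$. For (iv), the sets $G_n$ of~\eqref{defGn} are already shown in the excerpt to satisfy $\delta_H(G_n)\to 1$ while $\delta_1(G_n)\to 1/2$, so $1-\delta_H(G_n)\to 0$ yet $1-\delta_1(G_n)\not\to 0$.

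Item (iii) is the heart of the argument. I combine the trivial bound $1-\delta_H(E)\le 2R_E/(\rho_E+R_E)\le 2R_E/\rho_E$, extracted from~\eqref{ineqdeltaH}, with a lower estimate on $\rho_E/R_E$. Since $E\subset B_{\rho_E}(x_E)$ up to a negligible set, a maximal $R_E$-separated set in $B_{\rho_E}(x_E)$ produces disjoint open balls of radius $R_E/2$ packed inside $B_{\rho_E+R_E/2}(x_E)$, so its cardinality is at most $(2\rho_E/R_E+1)^N\le(3\rho_E/R_E)^N$ (using $\rho_E\ge R_E$), and the corresponding balls of radius $R_E$ cover $B_{\rho_E}(x_E)$. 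Pigeonhole then yields a center $y$ with $\lambda(E\cap B_{R_E}(y))/\lambda(E)\ge(R_E/(3\rho_E))^N$, which by~\eqref{defdelta1} gives $1-\delta_1(E)\ge(R_E/(3\rho_E))^N$, i.e.,
\[
\frac{R_E}{\rho_E}\le 3\,(1-\delta_1(E))^{1/N}.
\]
Inserting this into the trivial bound produces~\eqref{compar2} with the explicit constant $\eta:=6$, which completes the proposal.
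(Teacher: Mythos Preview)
Your proof is correct. Parts (i), (ii), and (iv) follow essentially the same lines as the paper: for~(i) the paper also plugs the $\delta_H$-optimal ball into~\eqref{defdelta1} and does a case split (it normalizes $\rho_E=1$ and uses the mean-value theorem where you invoke~\eqref{ineqdeltaH} and the convexity secant bound, ending with $\gamma=4N3^{N-1}$ instead of your $3(2^N-1)$); for~(ii) the paper uses $B_1\cup B_{1/n}(x)$ with a \emph{fixed} $|x|>1$, which gives $\delta_H\to(|x|-1)/(|x|+3)>0$ rather than your stronger $\delta_H\to1$; and (iv) is identical.

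Your treatment of (iii), however, is genuinely different and in fact sharper. The paper normalizes $R_E=\sqrt{N}/2$, tiles $\R^N$ by unit cubes, and uses the fact that each cube carries at most $(1-\delta_1(E))\lambda(E)$ of the mass of $E$ to locate two cubes with positive $E$-mass at $\ell^\infty$-distance at least $m_E\gtrsim(1-\delta_1(E))^{-1/N}$, thereby bounding $\mathrm{diam}(E)$ and hence $\rho_E$ from below; a final case split gives $\eta=8\sqrt{N}$. You instead cover $B_{\rho_E}(x_E)$ by at most $(3\rho_E/R_E)^N$ balls of radius $R_E$ via a maximal $R_E$-separated set and apply pigeonhole directly to~\eqref{defdelta1}, which avoids any normalization or case split and yields the dimension-free constant $\eta=6$. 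Your covering/packing route is cleaner and produces the better constant; the paper's cube argument has the mild conceptual advantage of singling out two well-separated ``pieces'' of $E$, but that extra information is not used elsewhere.
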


Part (iv) is immediate since the sets $G_n$ defined in~\eqref{defGn} are such that $\delta_H(G_n)\to1$ as $n\to+\infty$ and $\delta_1(G_n)\to1/2\neq1$ as $n\to+\infty$. For~(ii), consider a point $x\in\R^N$ with $|x|>1$ and the sets $H_n:=B_1\cup B_{1/n}(x)$ for $n\ge1$. There holds $\delta_1(H_n)\to0$ and~$\delta_H(H_n)\to(|x|-1)/(|x|+3)>0$ as $n\to+\infty$, yielding the desired conclusion. The proofs of parts~(i) and~(iii) are not as immediate, and are done in Subsection~\ref{sec41}.


\section{Influence of initial fragmentation on the large-time behavior}\label{sec3}

Let us come back to the Cauchy problem~\eqref{homo} with initial conditions $u_0=\mathds{1}_E$ as in~\eqref{defu0}, where $E\subset\R^N$ always stands for a bounded non-negligible Borel set and $f$ is a bistable-type function satisfying~\eqref{bistable}. Remember that $0<u(t,x)<1$ for all~$(t,x)\in(0,+\infty)\times\R^N$ for any such set $E$, and let us denote
$$\left\{\baa{l}
\mathcal{I}:=\big\{E:u(t,\cdot)\to1\hbox{ as $t\to+\infty$ locally uniformly in $\R^N$}\big\},\vspace{6pt}\\
\mathcal{E}:=\big\{E:u(t,\cdot)\to0\hbox{ as $t\to+\infty$ uniformly in $\R^N$}\big\},\vspace{6pt}\\
\mathcal{T}:=\big\{E:\exists\,a\in\R^N,\ u(t,\cdot)\to\Phi(\cdot+a)\hbox{ as $t\to+\infty$ uniformly in $\R^N$}\big\},\eaa\right.$$
where $\Phi:\R^N\to(0,1)$ is the unique radially symmetric stationary solution of~\eqref{homo} such that $\Phi(x)\to0$ as $|x|\to+\infty$. From the results of~\cite{p1} recalled in Section~\ref{intro}, any bounded non-negligible Borel set $E$ belongs to $\mathcal{I}\cup\mathcal{E}\cup\mathcal{T}$. The calligraphic letters $\mathcal{I}$, $\mathcal{E}$ and $\mathcal{T}$ respectively stand for invasion, extinction and threshold.

Our goal in this section is to determine sufficient conditions for $E$ to belong to~$\mathcal{I}$ or~$\mathcal{E}$, and to compare them. Notice at once from the comparison principle that if $E\subset F$ and if $F\in\mathcal{E}$ (resp. if $E\in\mathcal{I}$), then $E\in\mathcal{E}$ (resp. $F\in\mathcal{I}$). Actually, for monotone (for the inclusion) continuously increasing (for the Lebesgue measure) families of sets $E$, these sets belong to either $\mathcal{I}\cup\mathcal{E}$ up to at most one threshold value, from~\cite{p1}, and this is why we mainly focus on the conditions for which~$E\in\mathcal{I}$ or~$E\in\mathcal{E}$. We however do not consider here monotone families of sets~$E$ and we rather look for some conditions involving the fragmentation indices~$\delta_1(E)$ and~$\delta_H(E)$, for constant values of the Lebesgue measure~$\lambda(E)$. 

First of all, it turns out that the sets $\mathcal{I}$ and $\mathcal{E}$ are open for the topology generated by the balls $\mathbb{B}_r(E)$ defined in~\eqref{defB1}, as stated in the following proposition (whose proof is given in Subsection~\ref{sec42}).

\begin{proposition}\label{pro2}
For every $E\in\mathcal{I}$, there is $r>0$ such that $\mathbb{B}_r(E)\subset\mathcal{I}$, under the notation~\eqref{defB1}. Similarly, for every $E\in\mathcal{E}$, there is $s>0$ such that $\mathbb{B}_s(E)\subset\mathcal{E}$.
\end{proposition}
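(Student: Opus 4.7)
The plan is to combine a parabolic continuous dependence estimate of type $L^1\to L^\infty$ with the sharp threshold information for radial bumps already recalled in Section~\ref{intro}, namely the trichotomy from~\cite{p1} and the characterization~\eqref{defRalpha} of $R_\alpha$. In short, once $d_1(E,F)$ is small, the solutions $u_E$ and $u_F$ issued from $\mathds{1}_E$ and $\mathds{1}_F$ stay uniformly close at any fixed positive time, and then the comparison principle catches $u_F$ either below a subthreshold constant (extinction case) or above a supercritical bump $\alpha\,\mathds{1}_{B_R}$ with $R>R_\alpha$ (invasion case).

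The analytic input is the following continuous dependence estimate. Setting $w:=u_F-u_E$, the mean value theorem gives $\partial_t w=\Delta w+a(t,x)\,w$ in $(0,+\infty)\times\R^N$ with $|a(t,x)|\le L:=\|f'\|_{L^\infty([0,1])}$. Since $|w|$ satisfies $\partial_t|w|\le\Delta|w|+L\,|w|$ by Kato's inequality, the function $e^{-Lt}|w|$ is a subsolution of the heat equation, and the standard $L^1\to L^\infty$ heat kernel bound, together with $\|\mathds{1}_F-\mathds{1}_E\|_{L^1(\R^N)}=d_1(E,F)$, yields
\[
\|u_F(T,\cdot)-u_E(T,\cdot)\|_{L^\infty(\R^N)}\le(4\pi T)^{-N/2}\,e^{LT}\,d_1(E,F)
\]
for every $T>0$.

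For $E\in\mathcal{E}$, I would fix $T_0$ large enough so that $\|u_E(T_0,\cdot)\|_{L^\infty(\R^N)}<\theta/2$ (possible by the definition of $\mathcal{E}$) and set $s:=(\theta/2)\,(4\pi T_0)^{N/2}\,e^{-LT_0}$. Any $F$ with $d_1(E,F)<s$ then satisfies $\|u_F(T_0,\cdot)\|_\infty<\theta$. Since $f<0$ on $(0,\theta)$, the spatially homogeneous solution $v(t)$ of $v'=f(v)$ with $v(0):=\|u_F(T_0,\cdot)\|_\infty$ is a supersolution of~\eqref{homo} that decays monotonically to $0$, so by comparison $u_F(T_0+t,x)\le v(t)\to0$ uniformly in $x$, giving $F\in\mathcal{E}$.

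For $E\in\mathcal{I}$, I would first pick $\alpha\in(\theta,1)$ and $R>R_\alpha$ (both available thanks to~\eqref{defRalpha}), then take $T_0$ large enough so that $u_E(T_0,x)\ge(1+\alpha)/2$ on $\overline{B_R}$, which is possible since $u_E(t,\cdot)\to1$ locally uniformly. Choosing $r>0$ so small that $r\,(4\pi T_0)^{-N/2}\,e^{LT_0}<(1-\alpha)/2$, any $F$ with $d_1(E,F)<r$ satisfies $u_F(T_0,x)>\alpha$ on $B_R$, and thus $u_F(T_0,\cdot)\ge\alpha\,\mathds{1}_{B_R}=u_0^{\alpha,R}$ on $\R^N$ (using positivity of $u_F$ outside $B_R$). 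The comparison principle and~\eqref{defRalpha} then force $u_F(T_0+t,\cdot)\ge u^{\alpha,R}(t,\cdot)\to1$ locally uniformly, hence $F\in\mathcal{I}$. The only real step requiring care is the smoothing estimate for the linearized equation; the rest is a routine combination of~\eqref{bistable}, the maximum principle, and the sharp threshold~\eqref{defRalpha}.
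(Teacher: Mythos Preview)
Your proposal is correct and follows essentially the same approach as the paper: both arguments rest on the $L^1\to L^\infty$ smoothing estimate $\|u_F(T,\cdot)-u_E(T,\cdot)\|_{L^\infty}\le(4\pi T)^{-N/2}e^{M'T}d_1(E,F)$, then compare with $\alpha\,\mathds{1}_{B_R}$ (with $R>R_\alpha$) in the invasion case and with a subthreshold constant in the extinction case. The only cosmetic differences are that the paper states the smoothing estimate directly ``from the maximum principle'' rather than via Kato's inequality, and uses the thresholds $\theta/3$ then $\theta/2$ instead of your $\theta/2$ then $\theta$.
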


Secondly, it is easy to see that
\be\label{epsilonf}
\exists\,\varepsilon>0,\ \ \big(\lambda(E)\le\varepsilon\big)\Longrightarrow\big(E\in\mathcal{E}\big),
\ee
and this implication is independent of the fragmentation indices $\delta_1(E)$ and $\delta_H(E)$. Indeed, calling
\be\label{defM'f}
M':=\max_{[0,1]}|f'|
\ee
and $\varepsilon:=e^{-M'}(4\pi)^{N/2}\theta/2>0$, with $\theta\in(0,1)$ as in~\eqref{bistable}, and assuming that $\lambda(E)\le\varepsilon$, one has
$$0<u(1,x)\le\frac{e^{M'}}{(4\pi)^{N/2}}\int_Ee^{-|x-y|^2/4}dy\le\frac{e^{M'}\lambda(E)}{(4\pi)^{N/2}}\le\frac{\theta}{2}$$
for all $x\in\R^N$ from the maximum principle, hence $\|u(t,\cdot)\|_{L^\infty(\R^N)}\to0$ as $t\to+\infty$ since $f(0)=0$ and $f<0$ in $(0,\theta)$.

Thirdly, highly contracted sets of a given set lead to extinction, and highly dilated sets of a given set lead to invasion, as stated in the following proposition.

\begin{proposition}\label{pro3}
For any bounded non-negligible Borel $E\subset\R^N$, there are some real numbers $0<\underline{\mu}_{E}\le\overline{\mu}_{E}$ such that
\be\label{defunderovermu}\left\{\baa{lcl}
\big(0<\mu<\underline{\mu}_{E}\big) & \Longrightarrow & \big(\mu E\in\mathcal{E}\big),\vspace{3pt}\\
\big(\mu>\overline{\mu}_{E}\big) & \Longrightarrow & \big(\mu E\in\mathcal{I}\big).\eaa\right.
\ee
Furthermore, the sets $\big\{\mu>0:\mu E\in\mathcal{E}\big\}$ and $\big\{\mu>0:\mu E\in\mathcal{I}\big\}$ are open, and
\be\label{transition}
\big\{\mu>0:\mu E\in\mathcal{T}\big\}\neq\emptyset.
\ee
\end{proposition}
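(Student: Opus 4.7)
The plan is to establish the four assertions of the proposition in order: first extinction for small $\mu$ and invasion for large $\mu$, then openness of the sets
\begin{equation*}
\mathcal{E}^\star:=\{\mu>0:\mu E\in\mathcal{E}\}\qquad\hbox{and}\qquad\mathcal{I}^\star:=\{\mu>0:\mu E\in\mathcal{I}\},
\end{equation*}
and finally non-emptiness of $\mathcal{T}^\star:=\{\mu>0:\mu E\in\mathcal{T}\}$ by a connectedness argument. The extinction half is immediate from~\eqref{epsilonf}: since $\lambda(\mu E)=\mu^N\lambda(E)\to0^+$ as $\mu\to0^+$, setting $\underline{\mu}_E:=(\varepsilon/\lambda(E))^{1/N}$ ensures $\lambda(\mu E)\le\varepsilon$, and hence $\mu E\in\mathcal{E}$, for every $0<\mu\le\underline{\mu}_E$, where $\varepsilon>0$ is the universal constant of~\eqref{epsilonf}.

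The invasion half is the heart of the proof and is where I would use the Lebesgue density theorem. Since $\lambda(E)>0$, I pick a Lebesgue density point $x_0\in E$, for which $\lambda(B_r(x_0)\setminus E)=o(r^N)$ as $r\to 0^+$. I then fix a radius $R>R_1$, so that $B_R\in\mathcal{I}$ by~\eqref{defRalpha}. The translation invariance of~\eqref{homo} implies that every translate $B_R(y)$ also belongs to $\mathcal{I}$, and Proposition~\ref{pro2}, together with the same translation invariance, provides a constant $r_0>0$, independent of $y\in\R^N$, such that $\mathbb{B}_{r_0}(B_R(y))\subset\mathcal{I}$. The scaling identity $B_R(\mu x_0)\setminus\mu E=\mu\bigl(B_{R/\mu}(x_0)\setminus E\bigr)$ then yields
\begin{equation*}
\lambda\bigl(B_R(\mu x_0)\setminus\mu E\bigr)=\mu^N\lambda\bigl(B_{R/\mu}(x_0)\setminus E\bigr)\longrightarrow 0\quad\hbox{as }\mu\to+\infty.
\end{equation*}
Hence, for $\mu$ large enough, the subset $F_\mu:=\mu E\cap B_R(\mu x_0)$ of $\mu E$ satisfies $d_1(F_\mu,B_R(\mu x_0))<r_0$, so that $F_\mu\in\mathcal{I}$, and the comparison principle gives $\mu E\in\mathcal{I}$; any such invasion threshold can be taken as $\overline{\mu}_E$, which automatically satisfies $\overline{\mu}_E\ge\underline{\mu}_E$ since $\mathcal{E}\cap\mathcal{I}=\emptyset$.

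For the remaining assertions, the missing ingredient is the $L^1$-continuity of the map $(0,+\infty)\ni\mu\mapsto\mathds{1}_{\mu E}$, which I would prove by the standard approximation of $\mathds{1}_E$ by functions in $C_c(\R^N)$ followed by a change of variables $z=x/\mu$. Combined with Proposition~\ref{pro2}, this gives at once the openness of $\mathcal{E}^\star$ and $\mathcal{I}^\star$ in $(0,+\infty)$. Since both sets are nonempty by the first two steps and disjoint as $\mathcal{E}\cap\mathcal{I}=\emptyset$, the connectedness of $(0,+\infty)$ forces $\mathcal{E}^\star\cup\mathcal{I}^\star\subsetneq(0,+\infty)$, and any $\mu$ in the complement belongs to $\mathcal{T}^\star$, which establishes~\eqref{transition}. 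The main obstacle throughout is the density-point/scaling step, which is where the qualitative assumption $\lambda(E)>0$ is converted into the quantitative fact that $\mu E$ asymptotically fills up a ball of invasion radius $R>R_1$.
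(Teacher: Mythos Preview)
Your proof is correct and, for the first, second, and fourth assertions, follows essentially the same path as the paper: the small-$\mu$ extinction via~\eqref{epsilonf}, the large-$\mu$ invasion via a Lebesgue density point combined with Proposition~\ref{pro2} and the comparison principle, and the non-emptiness of $\mathcal{T}^\star$ by connectedness of $(0,+\infty)$.

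The genuine difference lies in the openness step. You invoke the $L^1$-continuity of the dilation map $\mu\mapsto\mathds{1}_{\mu E}=\mathds{1}_E(\cdot/\mu)$, proved for arbitrary bounded Borel $E$ by approximating $\mathds{1}_E$ in $L^1(\R^N)$ by a function $g\in C_c(\R^N)$ and using the change of variables $z=x/\mu$; this is elementary and works uniformly, with no hypothesis on $\partial E$. The paper, by contrast, first treats the special case $\lambda(\partial E)=0$ by dominated convergence (where pointwise convergence of $\mathds{1}_{\mu E}$ is available a.e.), and then handles the general case by a different, PDE-based argument: it rescales the solution itself, setting $v_\mu(t,x):=u_\mu(\mu^2 t/\mu_0^2,\mu x/\mu_0)$, observes that $v_\mu$ solves $\partial_t v_\mu=\Delta v_\mu+(\mu/\mu_0)^2 f(v_\mu)$ with the \emph{same} initial datum $\mathds{1}_{\mu_0 E}$ as $u_{\mu_0}$, and bounds $|v_\mu-u_{\mu_0}|$ via the maximum principle. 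Your route is shorter and reduces the question entirely to Proposition~\ref{pro2}; the paper's route avoids any approximation of $\mathds{1}_E$ and yields the explicit quantitative bound
\[
\Big|u_\mu\Big(\frac{\mu^2}{\mu_0^2}T,x\Big)-u_{\mu_0}\Big(T,\frac{\mu_0}{\mu}x\Big)\Big|\le\frac{M}{M'}\,\Big|\frac{\mu^2}{\mu_0^2}-1\Big|\,\big(e^{M'T}-1\big),
\]
which is not needed for the proposition but is of independent interest.
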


As a matter of fact,~\eqref{epsilonf} implies that $\underline{\mu}_{E}:=(\varepsilon/\lambda(E))^{1/N}>0$ satisfies the first assertion of~\eqref{defunderovermu}. If $E$ has a non-empty interior $\mathring{E}$, that is, if there are $x_0\in\R^N$ and $r>0$ such that $E\supset B_r(x_0)$, then $\mu E\supset B_{\mu r}(\mu x_0)$ and $\mu E\in\mathcal{I}$ as soon as $\mu r>R_{1}$, with $R_{1}>0$ as in~\eqref{defRalpha}, from the comparison principle and the invariance of~\eqref{homo} with respect to translations. The end of the proof of the second assertion of~\eqref{defunderovermu}, with the remaining case $\mathring{E}=\emptyset$, as well as the last part of Proposition~\ref{pro3}, are done in~Subsection~\ref{sec42}. Having in hand~\eqref{defRalpha} and Proposition~\ref{pro3}, it is tempting to conjecture that, for any bounded non-negligible Borel $E\subset\R^N$, the set $\{\mu>0:\mu E\in\mathcal{T}\}$ would be a singleton, that is, $\underline{\mu}_{E}=\overline{\mu}_{E}$ if~$\underline{\mu}_{E}$ and~$\overline{\mu}_{E}$ respectively denote the largest and the smallest real numbers satisfying~\eqref{defunderovermu}. This property is true if $E$ is further assumed to be star-shaped with respect to a point, say $x_0$, as follows from~\cite{p1}, since then the initial condition $\mathds{1}_{\mu(E-x_0)}$ are pointwise nondecreasing with respect to $\mu>0$ and increasing in $L^1(\R^N)$, and since for each $\mu>0$ the sets $\mu(E-x_0)$ and $\mu E$ belong to the same set $\mathcal{E}$, $\mathcal{T}$ or $\mathcal{I}$. The proof of the uniqueness of the element of $\{\mu>0:\mu E\in\mathcal{T}\}$ is however still open in the general case of sets $E$ that are not star-shaped.

All remaining results are concerned with the role of the fragmentation indices~$\delta_1(E)$ and~$\delta_H(E)$ on the membership of $E$ in $\mathcal{I}$ or $\mathcal{E}$. The first such result asserts that equimeasurable and highly fragmented sets for the index $\delta_1$ belong to the extinction set $\mathcal{E}$.

\begin{proposition}\label{pro4}
For each given $m>0$, there is $\varepsilon_{m}>0$ such that any bounded Borel set~$E$ satisfying $\lambda(E)=m$ and $\delta_1(E)\ge1-\epsilon_{m}$ belongs to $\mathcal{E}$.
\end{proposition}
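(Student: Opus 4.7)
\medskip
\noindent\emph{Proof sketch.}
The strategy is to exhibit a time $t_0=t_0(m,\varepsilon_m)>0$ at which $u(t_0,\cdot)\le\theta/2$ uniformly in $\R^N$, and then to close the argument by comparison with the spatially constant solution of the ODE $\dot v=f(v)$. Since $f(0)=0$ and $f\in C^1([0,1])$, one has the linear upper bound $f(s)\le M's$ on $[0,1]$, with $M'$ as in~\eqref{defM'f}. A direct computation shows that $\bar u(t,x):=e^{M't}(G_t*\mathds{1}_E)(x)$, where $G_t(z)=(4\pi t)^{-N/2}e^{-|z|^2/(4t)}$ is the Gaussian heat kernel, is a super-solution of~\eqref{homo}, and the parabolic comparison principle then yields
\[
u(t,x)\le e^{M't}(G_t*\mathds{1}_E)(x)\qquad\text{for every }t>0,\ x\in\R^N.
\]

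The quantitative input from the fragmentation hypothesis comes via~\eqref{defdelta1}: the bound $\delta_1(E)\ge1-\varepsilon_m$ is exactly equivalent to $\lambda(E\cap B_{R_E}(x))\le\varepsilon_m\,m$ for every $x\in\R^N$, with $R_E=(m/\omega_N)^{1/N}$ depending only on $m$ and $N$. Splitting $E=(E\cap B_{R_E}(x))\cup(E\setminus B_{R_E}(x))$ and using the trivial bound $G_{t_0}\le(4\pi t_0)^{-N/2}$ on the first piece and the Gaussian tail $G_{t_0}(z)\le(4\pi t_0)^{-N/2}e^{-R_E^2/(4t_0)}$ (valid for $|z|\ge R_E$) on the second, I would obtain the pointwise estimate
\[
(G_{t_0}*\mathds{1}_E)(x)\le\frac{m}{(4\pi t_0)^{N/2}}\bigl(\varepsilon_m+e^{-R_E^2/(4t_0)}\bigr),\qquad x\in\R^N,\ t_0>0.
\]

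I would then balance the two terms in the parenthesis by choosing $t_0:=R_E^2/(4\log(1/\varepsilon_m))$, so that $e^{-R_E^2/(4t_0)}=\varepsilon_m$; using $m=\omega_N R_E^N$ the prefactor simplifies, and after multiplication by $e^{M't_0}$ I obtain
\[
\sup_{x\in\R^N}u(t_0,x)\le\frac{2\omega_N}{\pi^{N/2}}\,\exp\!\Bigl(\frac{M'R_E^2}{4\log(1/\varepsilon_m)}\Bigr)\,\varepsilon_m\,\bigl(\log(1/\varepsilon_m)\bigr)^{N/2}.
\]
With $m$ (and hence $R_E$) held fixed, as $\varepsilon_m\to0^+$ the exponential factor tends to $1$ while $\varepsilon_m(\log(1/\varepsilon_m))^{N/2}\to0$; consequently, for $\varepsilon_m$ sufficiently small (depending only on $m$, $N$, $M'$ and $\theta$) the right-hand side is at most $\theta/2$. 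The conclusion then follows from a $0$-dimensional comparison: the constant-in-$x$ map $(t,x)\mapsto v(t)$, where $v$ solves $\dot v=f(v)$ with $v(t_0)=\theta/2$, is a solution of~\eqref{homo} on $[t_0,+\infty)\times\R^N$, and because $f<0$ on $(0,\theta)$ one has $v(t)\searrow0$; the comparison principle then gives $0\le u(t,\cdot)\le v(t)\to0$ uniformly in $\R^N$, that is $E\in\mathcal{E}$.

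The delicate point is the tuning of $t_0$: at any fixed time, the convolution bound is of order $m(4\pi t_0)^{-N/2}$ and does \emph{not} vanish with $\varepsilon_m$, whereas letting $t_0$ grow lets the factor $e^{M't_0}$ overwhelm any gain from Gaussian smoothing. The logarithmic scaling $t_0\sim R_E^2/\log(1/\varepsilon_m)$ is essentially forced by the competition between these two effects, and it is precisely what converts the purely geometric input $\delta_1(E)\ge1-\varepsilon_m$ into a pointwise control on $u(t_0,\cdot)$ that is uniform in the class of Borel sets $E$ with $\lambda(E)=m$.
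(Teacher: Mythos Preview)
Your argument is correct. The supersolution $e^{M't}G_t*\mathds{1}_E$, the two-piece splitting $E=(E\cap B_{R_E}(x))\cup(E\setminus B_{R_E}(x))$, the choice $t_0=R_E^2/(4\log(1/\varepsilon_m))$, and the final ODE comparison all work as stated.

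The paper proceeds differently: it evaluates at the \emph{fixed} time $t=1$ and tiles $\R^N$ by a lattice of cubes $x+2ks+(-s,s)^N$, with $s=(m/\omega_N)^{1/N}/\sqrt{N}$ chosen so that each cube sits inside a ball of radius $R_E$. The fragmentation hypothesis then bounds the mass of $E$ in every such cube by $m(1-\delta_1(E))$, and summing the Gaussian weights over the lattice gives
\[
u(1,x)\ \le\ \frac{e^{M'}}{(4\pi)^{N/2}}\,m\,(1-\delta_1(E))\sum_{k\in\Z^N}e^{-\max(2\|k\|_\infty-1,0)^2s^2/4},
\]
a bound that is \emph{linear} in $1-\delta_1(E)$ at a fixed time. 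Your two-piece splitting is more elementary (no lattice bookkeeping) but, being coarser, forces you to let $t_0\to0$ and yields the slightly worse order $\varepsilon_m(\log(1/\varepsilon_m))^{N/2}$. In particular, your closing remark that the logarithmic time scaling is ``essentially forced'' is accurate only for the two-piece decomposition; the paper's lattice argument shows that a finer splitting does give a fixed-time estimate that already vanishes with $\varepsilon_m$.
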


Two comments are in order on this result, which is proved in Subsection~\ref{sec42}. First of all, the conclusion does not hold without the hypothesis that the sets have a given Lebesgue measure $m$, because of Proposition~\ref{pro3} and the invariance of $\delta_1$ with respect to dilations. More explicitly, for instance, the sets $Rn^2E_n$, with $n\ge2$, $E_n$ as in~\eqref{defEn} and $R>R_{1}$, all belong to $\mathcal{I}$ from~\eqref{defRalpha} and the comparison principle, whereas $\delta_1(Rn^2E_n)=\delta_1(E_n)\to1$ as $n\to+\infty$.

We also point out that a similar statement as Proposition~\ref{pro4} would be false if~$\delta_1$ were replaced by~$\delta_H$. For instance, consider $R>R_{1}$ and $E'_n:=B_R\cup B_1(x_n)$, with $R+1<|x_n|\to+\infty$ as $n\to+\infty$. Then, for every $n\in\N$, one has $\lambda(E'_n)=\omega_N(R^N+1)$ and $E'_n\in\mathcal{I}$ from the comparison principle, whereas $\delta_H(E'_n)\to1$ as $n\to+\infty$. On the other hand, the extinction is nevertheless possible for highly fragmented sets for the index~$\delta_H$ with fixed Lebesgue measure, even if they are not highly fragmented for the index $\delta_1$. For instance, for any $r\in(0,2R_{1})$, the sets $D_a:=(-a-r,-a)\cup(a,a+r)$, given in the first paragraph of Section~\ref{sec2}, with measure $2r$, belong to $\mathcal{E}$ for all $a>0$ large enough, while $\delta_H(D_a)\to1$ and~$\delta_1(D_a)\to1/2$ as $a\to+\infty$ (this follows from~\cite{grh}, and a similar property holds immediately in any dimension $N\ge1$).

After dealing with highly fragmented sets, let us now consider weakly fragmented sets, in the following proposition and an immediate corollary.

\begin{proposition}\label{pro5}
For each given $m>\lambda(B_{R_{1}})=\omega_NR_{1}^N$, with $R_{1}$ given in~\eqref{defRalpha}, there is $\eta_{m}>0$ such that any bounded Borel set $E$ satisfying $\lambda(E)=m$ and $\delta_1(E)\le\eta_{m}$ belongs to~$\mathcal{I}$.
\end{proposition}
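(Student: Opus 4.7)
The plan is to reduce the statement to the openness of $\mathcal{I}$ in the $d_1$-topology, which is precisely Proposition~\ref{pro2}, combined with the fact that every ball whose radius exceeds $R_1$ already belongs to $\mathcal{I}$ by~\eqref{defRalpha}.

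First I would fix $m>\omega_N R_1^N$ and set $R:=(m/\omega_N)^{1/N}$, so that $R>R_1$ and every bounded Borel set $E$ with $\lambda(E)=m$ has $R_E=R$. By~\eqref{defRalpha} (applied with $\alpha=1$) the ball $B_R$ lies in $\mathcal{I}$, so Proposition~\ref{pro2} yields some $r>0$ with $\mathbb{B}_r(B_R)\subset\mathcal{I}$. Since~\eqref{homo} is translation invariant, the same radius $r$ works for every translate: $\mathbb{B}_r(B_R(x))\subset\mathcal{I}$ for every $x\in\R^N$.

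It then suffices to convert a bound on $\delta_1(E)$ into a bound on $d_1(E,B_R(x))$ for a suitable center $x$. Given $E$ with $\lambda(E)=m$ and $\delta_1(E)\le\eta_m$, formula~\eqref{defdelta1} provides some $x_0\in\R^N$ realizing the minimum, hence $\lambda(E\setminus B_R(x_0))\le\eta_m\, m$; since $\lambda(E)=\lambda(B_R(x_0))=m$ the two symmetric pieces $E\setminus B_R(x_0)$ and $B_R(x_0)\setminus E$ have equal measure, so
$$d_1\big(E,B_R(x_0)\big)=2\,\lambda\big(E\setminus B_R(x_0)\big)\le 2\eta_m\, m.$$
Choosing $\eta_m:=r/(3m)$ (any positive constant strictly below $r/(2m)$ works) yields $d_1(E,B_R(x_0))<r$, and therefore $E\in\mathbb{B}_r(B_R(x_0))\subset\mathcal{I}$. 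The only non-trivial ingredient is Proposition~\ref{pro2}, established independently in Subsection~\ref{sec42}; everything else is bookkeeping with the definitions of $\delta_1$, $d_1$, $R_E$ and the translation invariance of~\eqref{homo}, so I foresee no real obstacle in carrying out this plan.
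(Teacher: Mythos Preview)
Your proposal is correct and follows essentially the same approach as the paper: both arguments observe that the ball $B_R$ with $R=(m/\omega_N)^{1/N}>R_1$ lies in $\mathcal{I}$, use~\eqref{defdelta1} to get $d_1(E,B_R(x_0))=2m\,\delta_1(E)$ for an optimal center $x_0$, and then conclude by $L^1$-stability of invasion. The only cosmetic difference is that you invoke Proposition~\ref{pro2} (and translation invariance) as a black box, whereas the paper reproves the relevant heat-kernel estimate~\eqref{ineqT} inline at a well-chosen time $T$; your packaging is arguably cleaner.
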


Proposition~\ref{pro5} is proved in Subsection~\ref{sec42}. Together with Proposition~\ref{pro1}~(i), the following corollary immediately holds.

\begin{corollary}\label{cor1}
For each given $m>\lambda(B_{R_{1}})=\omega_NR_{1}^N$, any bounded Borel set $E$ satisfying $\lambda(E)=m$ and $\delta_H(E)\le\eta_{m}/\gamma$ belongs to~$\mathcal{I}$, where $\eta_{m}>0$ is given in Proposition~$\ref{pro5}$ and $\gamma>0$ in~\eqref{compar1}.
\end{corollary}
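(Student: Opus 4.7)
The plan is to combine Proposition~\ref{pro5} with the comparison inequality~\eqref{compar1} of Proposition~\ref{pro1}~(i). The corollary is essentially immediate, so the proof is just a short chain of inequalities rather than a substantive argument.

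First, I would fix $m>\omega_NR_1^N$ and let $\eta_m>0$ be the constant produced by Proposition~\ref{pro5} for this value of~$m$, and let $\gamma>0$ be the absolute constant from~\eqref{compar1}. Consider any bounded Borel set $E\subset\R^N$ satisfying $\lambda(E)=m$ and $\delta_H(E)\le\eta_m/\gamma$. Since $\lambda(E)=m>0$, the set $E$ is non-negligible, so Proposition~\ref{pro1}~(i) applies and gives
$$\delta_1(E)\le\gamma\,\delta_H(E)\le\gamma\cdot\frac{\eta_m}{\gamma}=\eta_m.$$

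Next, with the inequality $\delta_1(E)\le\eta_m$ in hand together with $\lambda(E)=m>\omega_NR_1^N$, Proposition~\ref{pro5} directly yields $E\in\mathcal{I}$, which is the desired conclusion.

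There is no real obstacle here; the only thing to verify is that the constant $\gamma$ in~\eqref{compar1} is universal (independent of $E$ and of $m$), so that the threshold $\eta_m/\gamma$ depends only on $m$ as claimed in the statement. This is exactly what Proposition~\ref{pro1}~(i) provides. One could also remark, for completeness, that the condition $\delta_H(E)\le\eta_m/\gamma<1$ is compatible with $E$ being far from a single ball (as the spherical shell example preceding the statement shows), so the corollary is genuinely a statement about weakly $\delta_H$-fragmented sets rather than a vacuous one.
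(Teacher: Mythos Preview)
Your proof is correct and matches the paper's approach exactly: the paper states that the corollary follows immediately from Proposition~\ref{pro5} together with Proposition~\ref{pro1}~(i), which is precisely the chain of inequalities you wrote out. The additional remarks you make (universality of $\gamma$, non-vacuity) are sound but not needed for the argument.
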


As for Proposition~\ref{pro4}, the conclusions of Proposition~\ref{pro5} and Corollary~\ref{cor1} immediately do not hold without the hypothesis that the sets have a given Lebesgue measure $m$. For instance, the balls $B_r$ with $0<r<R_{1}$ belong to $\mathcal{E}$ by~\eqref{defRalpha}, but nevertheless have fragmentation indices $\delta_1(B_r)$ and $\delta_H(B_r)$ equal to $0$.

Finally, after the previous results about highly or weakly fragmented sets with given Lebesgue measure, we investigate the following question: if two bounded Borel sets~$E_1$ and~$E_2$ have the same Lebesgue measure, is it possible to decide about their membership of $\mathcal{E}$ or $\mathcal{I}$ according to the comparison of the values of~$\delta_1(E_1)$ and~$\delta_1(E_2)$, or~$\delta_H(E_1)$ and~$\delta_H(E_2)$? In other words, is there a kind of monotonicity of the large-time dynamics of the solutions of~\eqref{homo} and~\eqref{defu0} with respect to the fragmentation indices $\delta_1$ or $\delta_H$ of the initial set~$E$?

Actually, the answer to this question is easily seen to be false for the index $\delta_H$. Let us explain why in this paragraph. Corollary~\ref{cor1} says that, for a given $m>\lambda(B_{R_{1}})$, the weakly fragmented bounded Borel sets $E$ for the index~$\delta_H$ (namely, $\delta_H(E)\le\eta_{m}/\gamma$) with Lebesgue measure equal to $m$ belong to the invasion set~$\mathcal{I}$. Now, call $R:=(m/\omega_N)^{1/N}>R_{1}$, pick any $e\in\R^N\setminus\{0\}$ and define the bounded Borel sets
$$O_n:=\bigcup_{k=1}^nB_{R/n^{1/N}}(ke).$$
One has $\lambda(O_n)=m$ for all $n\in\N$ large enough, and $\delta_1(O_n)\to1$ as $n\to+\infty$ (and also $\delta_H(O_n)\to1$ as $n\to+\infty$ by Proposition~\ref{pro1}). Using Proposition~\ref{pro4}, there is $n_0\ge2$ large enough such that $\lambda(O_{n_0})=m$ and $O_{n_0}\in\mathcal{E}$. Pick now any $R'\in(R_{1},R)$ and call $r'>0$ such that $\omega_N{r'}^N=m-\omega_N{R'}^N$. Choose any sequence $(x_p)_{p\in\N}$ in $\R^N$ such that $|x_p|\to+\infty$ as $p\to+\infty$. The bounded Borel sets
$$Q_p:=B_{R'}\cup B_{r'}(x_p)$$
satisfy $\lambda(Q_p)=m$ for all $p$ large enough, and they belong to $\mathcal{I}$ since they contain the ball~$B_{R'}$ with $R'>R_{1}$. Furthermore, $\lim_{p\to+\infty}\delta_H(Q_p)=1>\delta_H(O_{n_0})>0$ (the inequality $\delta_H(O_{n_0})>0$ holds since $n_0\ge2$ and thus $O_{n_0}$ is not a ball up to a negligible set). Therefore, there is $p_0\in\N$ large enough such that $\lambda(Q_{p_0})=m$ and $\delta_H(Q_{p_0})>\delta_H(O_{n_0})>0$, while $Q_{p_0}\in\mathcal{I}$. As a conclusion, the sets $B_R$, $O_{n_0}$ and $Q_{p_0}$ are equimeasurable,
$$0=\delta_H(B_R)<\delta_H(O_{n_0})<\delta_H(Q_{p_0}),$$
while
$$B_R\in\mathcal{I},\ \ O_{n_0}\in\mathcal{E}\ \hbox{ and }\ Q_{p_0}\in\mathcal{I}.$$
In other words, there is no monotonicity of the large-time dynamics of the solutions of~\eqref{homo} and~\eqref{defu0} with respect to $\delta_H(E)$ in the class of equimeasurable sets.

As far as the fragmentation index $\delta_1$ is concerned, the answer to the same monotonicity question is not that clear. For a given $m>\lambda(B_{R_{1}})$, we know from Proposition~\ref{pro5} that the weakly fragmented bounded Borel sets $E$ for the index $\delta_1$ (namely, $\delta_1(E)\le\eta_{m}$) with Lebesgue measure equal to $m$ belong to the invasion set~$\mathcal{I}$, while the highly fragmented ones (namely, $\delta_1(E)\ge1-\epsilon_{m}$) belong to the extinction set $\mathcal{E}$, from Proposition~\ref{pro4}. However, what happens for intermediate values of $\delta_1(E)$ is not as clear as with the fragmentation index $\delta_H(E)$. The last main result of the paper actually shows that there is in general no monotonicity of the large-time dynamics of the solutions of~\eqref{homo} and~\eqref{defu0} with respect to~$\delta_1(E)$ in the class of equimeasurable sets. 

\begin{theorem}\label{th1}
There are some bounded non-negligible Borel sets $E_1$ and $E_2$ such that $\lambda(E_1)=\lambda(E_2)$, $0<\delta_1(E_2)<\delta_1(E_1)$, while $E_1\in\mathcal{I}$ and $E_2\in\mathcal{E}$.
\end{theorem}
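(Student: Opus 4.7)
My plan is to realize the non-monotonicity by pitting the fragmentation index $\delta_1=1-1/k$ of $k$ far-apart, equal, invading balls against the index $\delta_1\simeq1-\mu$ of a fine fragmentation of sub-ignition density $\mu<\theta$ confined inside a single ball. Choosing $1/k<\mu<\theta$ will give $\delta_1(E_2)<\delta_1(E_1)$, while $E_1\in\mathcal{I}$ (by containing balls of radius $>R_1$) and $E_2\in\mathcal{E}$ (by the sub-threshold density argument), with the common measure adjusted through the relation $\mu\omega_N(R'_2)^N=k\omega_N R^N$.

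Concretely, I would fix $R>R_1$ (with $R_1$ as in~\eqref{defRalpha}), an integer $k\ge2$ with $\theta>1/k$, a density $\mu\in(1/k,\theta)$, and points $x_1,\ldots,x_k\in\R^N$ with pairwise distances larger than $2(1+k^{1/N})R$. Setting $E_1:=\bigcup_{i=1}^{k}B_R(x_i)$ gives $\lambda(E_1)=k\omega_N R^N$, $E_1\in\mathcal{I}$ by~\eqref{defRalpha} and the comparison principle, and, since the equimeasurable ball of radius $k^{1/N}R$ intersects at most one of the $B_R(x_i)$, $\delta_1(E_1)=1-1/k$. In parallel, setting $R'_2:=(kR^N/\mu)^{1/N}$ and mimicking the sets $F_n$ of~\eqref{defFn}, take
$$E_2^n:=\bigcup_{y\in(\Z^N/n)\cap B_{R'_2}}\Bigl[y+\Bigl(\!\!-\tfrac{\mu^{1/N}}{2n},\tfrac{\mu^{1/N}}{2n}\Bigr)^N\Bigr],\qquad n\ge1.$$
The same counting arguments as the ones producing $\delta_1(F_n)\to 1-\nu^N$ give $\lambda(E_2^n)\to\mu\omega_N(R'_2)^N=k\omega_N R^N=\lambda(E_1)$ and $\delta_1(E_2^n)\to1-\mu$ as $n\to+\infty$; since $1-\mu<1-1/k$ by construction, for $n$ large one obtains $0<\delta_1(E_2^n)<\delta_1(E_1)$ strictly.

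The central step is to show $E_2^n\in\mathcal{E}$ for large~$n$. Using $f(s)\le M's$ on $[0,1]$, with $M'$ given by~\eqref{defM'f}, the function $\bar u(t,x):=e^{M't}(K_t\!*\!\mathds{1}_{E_2^n})(x)$, where $K_t$ is the Euclidean heat kernel, is a supersolution of~\eqref{homo} starting from $\mathds{1}_{E_2^n}$, so $u\le\bar u$ by the parabolic comparison principle. Pick $t_0>0$ small enough that $\mu\,e^{M't_0}<\theta$. The measures $\mathds{1}_{E_2^n}(y)\,dy$ converge weakly to $\mu\,\mathds{1}_{B_{R'_2}}(y)\,dy$; since $K_{t_0}$ is smooth with uniform Gaussian decay, a Riemann-sum estimate then gives the uniform convergence $K_{t_0}\!*\!\mathds{1}_{E_2^n}\to\mu\,K_{t_0}\!*\!\mathds{1}_{B_{R'_2}}$ in $L^\infty(\R^N)$. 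The limit being bounded above by $\mu$, for $n$ large enough $\bar u(t_0,\cdot)\le e^{M't_0}(\mu+\varepsilon)<\theta$ uniformly, with $\varepsilon>0$ chosen so that $e^{M't_0}(\mu+\varepsilon)<\theta$. Hence $\|u(t_0,\cdot)\|_{L^\infty(\R^N)}<\theta$, and comparing $u$ for $t\ge t_0$ with the spatially constant supersolution $v$ solving $v'=f(v)$, $v(t_0)=\|u(t_0,\cdot)\|_{L^\infty(\R^N)}\in(0,\theta)$, which decays to~$0$ because $f<0$ on $(0,\theta)$, shows $u(t,\cdot)\to0$ uniformly; i.e., $E_2^n\in\mathcal{E}$.

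A negligible modification of $E_2^n$ (e.g., adding or removing finitely many cubes) then produces a set $E_2$ with $\lambda(E_2)=\lambda(E_1)$ exactly; by the openness of $\mathcal{E}$ in the $d_1$-topology (Proposition~\ref{pro2}) and the $d_1$-continuity of $\delta_1$, we still have $E_2\in\mathcal{E}$ and $0<\delta_1(E_2)<\delta_1(E_1)$, completing the proof. The main technical difficulty is establishing the uniform convergence $K_{t_0}\!*\!\mathds{1}_{E_2^n}\to\mu\,K_{t_0}\!*\!\mathds{1}_{B_{R'_2}}$ on $\R^N$: the interior estimate is routine, but uniformity near $\partial B_{R'_2}$ and at infinity relies on the Gaussian decay of $K_{t_0}$ together with the uniform boundedness of the supports of the $\mathds{1}_{E_2^n}$.
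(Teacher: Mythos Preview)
Your proof is correct and takes a genuinely different route from the paper's. In the paper's homogenization proof, \emph{both} sets are finely fragmented: $E_1=F_n$ is a union of small cubes of density $\alpha\in(\theta,1)$ filling a ball $B_R$ with $R>R_\alpha$, and invasion is obtained by showing that after short time the solution is close to the one starting from $\alpha\mathds{1}_{B_R}$; the companion set $E_2=H_n$ is a similar union of density $\beta>\alpha$ over a ball $B_{R'}$ with $R'<R_\alpha$ (plus a small far-away ball to match measures), and extinction is obtained by homogenizing to $\beta\mathds{1}_{B_{R'}}$. The inequality $\delta_1(E_2)<\delta_1(E_1)$ then comes from $1-\beta<1-\alpha$. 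Your construction is more elementary: you make $E_1$ a disjoint union of $k$ large balls, so that $E_1\in\mathcal{I}$ is immediate by comparison and $\delta_1(E_1)=1-1/k$ is computed exactly; and you take $E_2$ homogenized with sub-ignition density $\mu<\theta$, so that the supersolution $e^{M't_0}K_{t_0}*\mathds{1}_{E_2^n}$ is uniformly below $\theta$ for large $n$, giving extinction without ever invoking $R_\alpha$ or delicate radius choices. What your approach buys is simplicity and robustness of the extinction step; what the paper's approach buys is the conceptually stronger message (emphasized right after the statement of the theorem) that the \emph{invading} set itself can be genuinely fragmented, so that fragmentation is seen to actively promote invasion rather than merely not prevent it. One small point: your final measure-matching step would be cleanest via scaling---set $E_2:=\mu_n E_2^n$ with $\mu_n=(\lambda(E_1)/\lambda(E_2^n))^{1/N}\to1$, so that $\delta_1$ is exactly preserved and $E_2\in\mathcal{E}$ follows from the openness in Proposition~\ref{pro3}; this avoids having to justify the $d_1$-continuity of $\delta_1$, which is true but not stated in the paper.
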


With the notations of Theorem~\ref{th1} and Proposition~\ref{pro4}, calling $m:=\lambda(E_1)=\lambda(E_2)>0$, one has $0<\delta_1(E_2)<\delta_1(E_1)<1-\epsilon_{m}$. Consider now the sets $E_n$ defined in~\eqref{defEn} (for $n\ge3$, to avoid the confusion with the above sets $E_1$ and $E_2$) and satisfying~\eqref{delta1En}. There is $n_0\ge3$ such that $\delta_1(E_{n_0})\ge1-\epsilon_{m}$. Call $F:=\mu E_{n_0}$, with $\mu=(m/\lambda(E_{n_0}))^{1/N}>0$, hence $\lambda(F)=m$, $\delta_1(F)=\delta_1(E_{n_0})\ge1-\epsilon_{m}$ and $F\in\mathcal{E}$ by Proposition~\ref{pro4}. The bounded non-negligible Borel sets $E_0$, $E_1$ and $F$ are equimeasurable and satisfy
$$\delta_1(E_2)<\delta_1(E_1)<\delta_1(F),$$
while
$$E_2\in\mathcal{E}, \ \ E_1\in\mathcal{I},\ \hbox{ and }\ F\in\mathcal{E}.$$

Theorem~\ref{th1}, sustained by numerical simulations (see Figure~\ref{fig:evolution_u}), is proved in Subsection~\ref{sec43}. We provide a proof based on some homogenization results, holding in any dimension $N\ge1$. We also give another proof, holding in dimensions $N\ge2$, based on completely different geometric arguments and the construction of suitable initial sets as intersections of balls with cubes. The first proof, based on homogenization techniques, appears to highlight an intrinsically positive effect of fragmentation on invasibility, as shown in Fig.~\ref{fig:evolution_u}. The second proof, which is more geometric, is instead based on the lack of the $\delta_1$ index to capture certain types of fragmentation, rather than on an intrinsic effect of fragmentation. Therefore, we believe that the arguments of the first proof, hence the non-monotonicity of the large-time dynamics of the solutions of~\eqref{homo} and~\eqref{defu0} with respect to the fragmentation of $E$ in the class of equimeasurable sets, should remain valid for a large class of fragmentation indices (unlike the arguments of the second proof, which should no longer hold for indices that capture the distance between the connected components of $E$ more finely, see Fig.~\ref{fig:geom} in Subsection~\ref{sec43}). Other indices could be based, for example, on the Wasserstein distance between the measures $\mathds{1}_E$ and $\mathds{1}_{B_{R_E}(x)}$.

\begin{figure}
\centering
\includegraphics[width=0.45\textwidth]{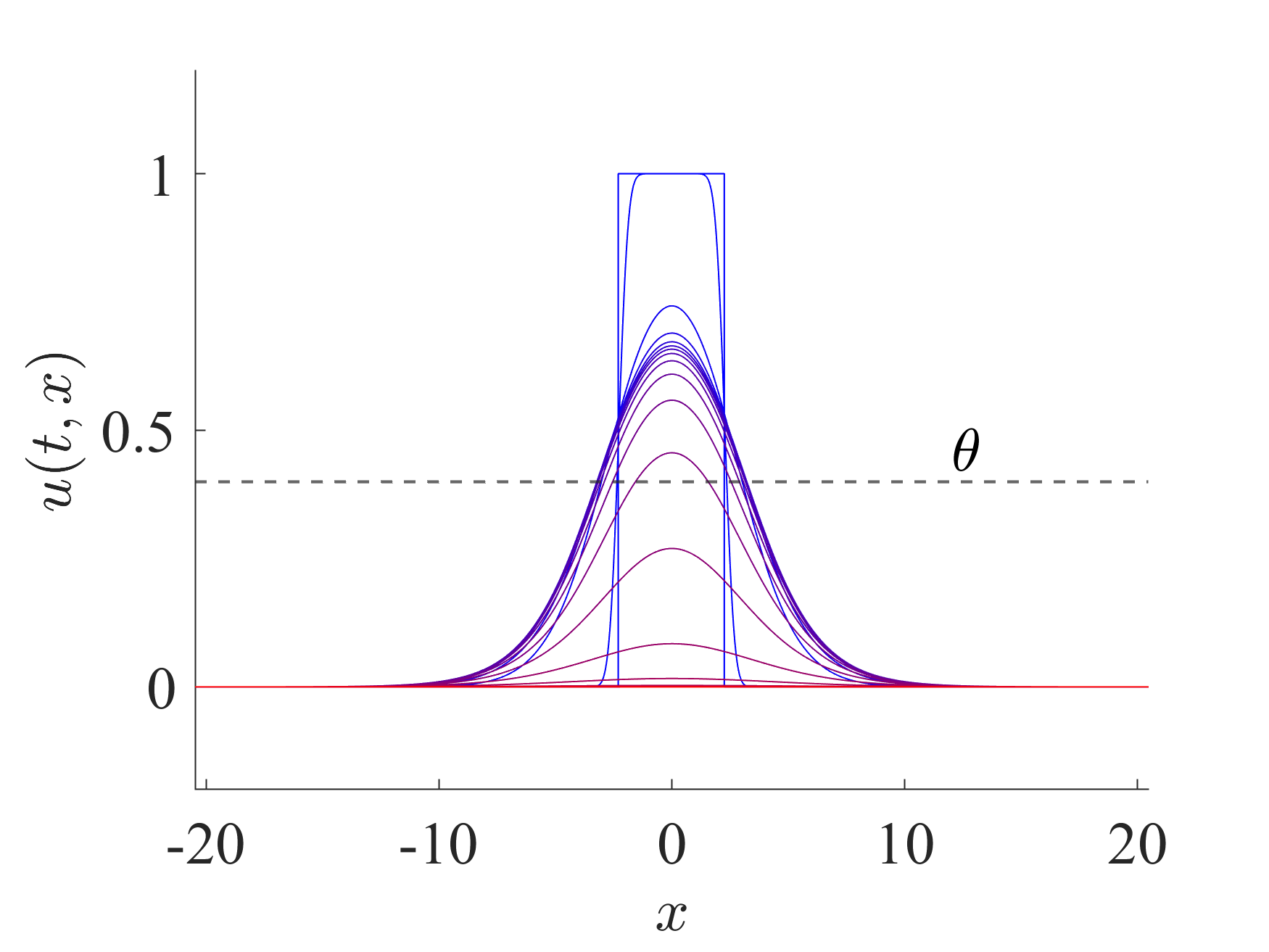}
\includegraphics[width=0.45\textwidth]{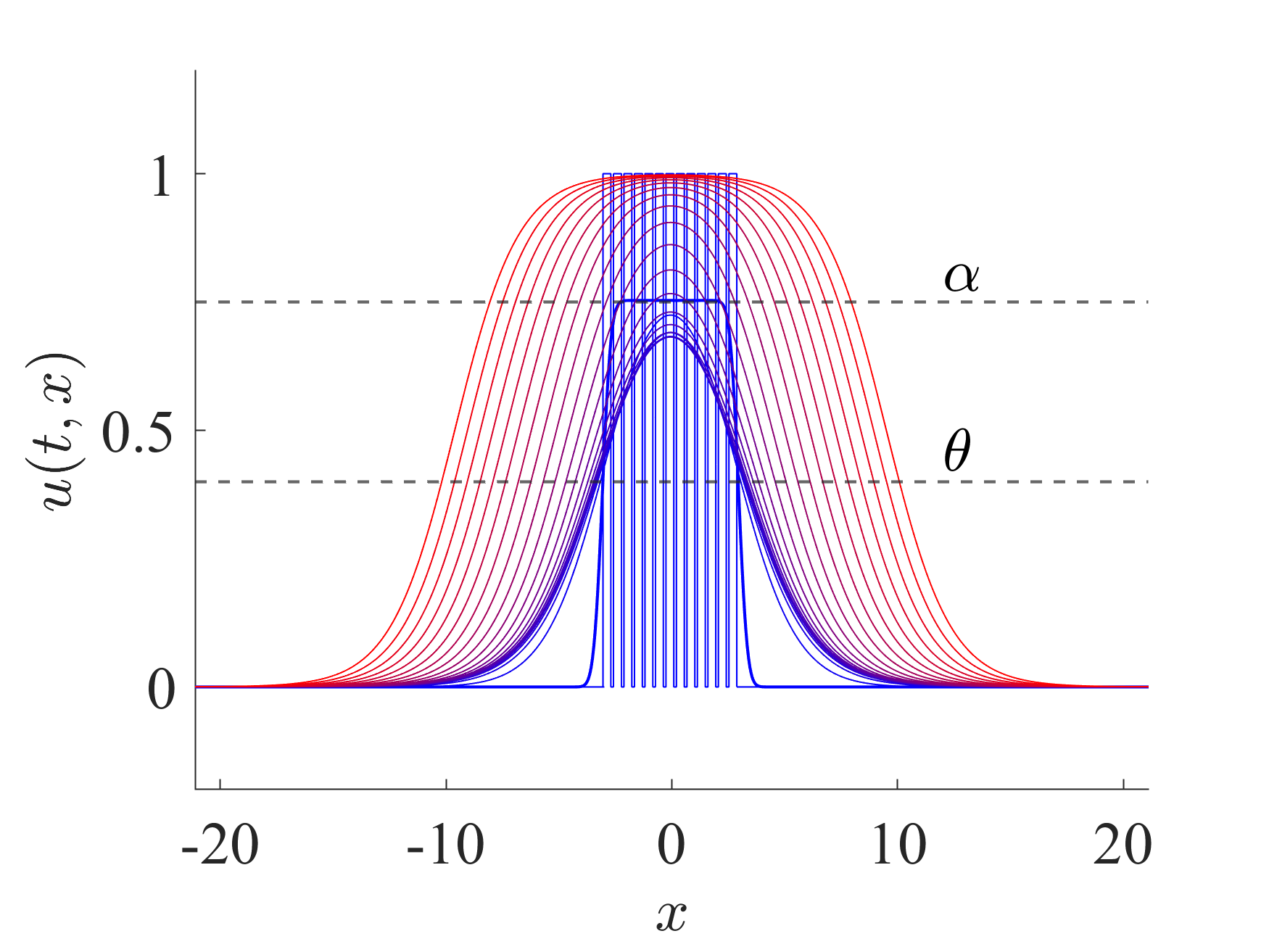}
\caption{Evolution of the numerical solution $u(t,x)$ of~\eqref{homo}, with $N=1$, $f(s)=s(1-s)(s-\theta)$, and $\theta=0.4$, starting with an initial condition $u_0(x)=\mathds{1}_E(x)$ with (left) $E=E_1:=(-L/2,L/2)$ and (right) $E=E_2:=\bigcup_{x\in\Z \cap [-k,k]}\Big[\frac{x}{z}+\Big(\!\!-\!\frac{\alpha}{2z},\frac{\alpha}{2z}\Big)\Big]$. The curves correspond to the solution  for $t=0$, $t=0.05$ and for $t$ ranging from 4 to 80 with a step size of 4. The gradient color goes from blue for the earliest times to red for the latest times. In the left panel, $\lambda(E_1)=L=4.55$ and $\delta_1(E_1)=0$; in the right panel $\alpha=3/4$, $z=2.16$, $k=6$, $\lambda(E_2)=4.52$ and $\delta_1(E_2)=0.23$. We observe here that, though $z$ is not that large and $\lambda(E_2)$ is even smaller than $\lambda(E_1)$, fragmentation (right panel) improves invasion success. We also note that $u(t,x)\approx \alpha \mathds{1}_{B_R}$ with $R=k/z+\alpha/(2 \, z)$ when $t\ll 1$. The Matlab code used for the computations is available at \url{http://doi.org/10.17605/OSF.IO/ZM479}. 
}
\label{fig:evolution_u}
\end{figure}


\section{Proofs of the main results}\label{sec4}


\subsection{Comparisons between the indices $\delta_1$ and $\delta_H$: proof of Proposition~\ref{pro1}}\label{sec41}

\begin{proof}[Proof of Proposition~$\ref{pro1}$] As already underlined in Section~\ref{sec2} after the statement of Proposition~\ref{pro1}, only parts~(i) and~(iii) remain to be proved. For the proof of~(i), one can restrict without loss of generality to the class of bounded Borel sets $E$ such that $\rho_E=1$, since the indices~$\delta_1$ and~$\delta_H$ are invariant by dilation or contraction of the sets. Consider any such set $E$. Assume first that
$$\delta_H(E)<\frac14.$$
From~\eqref{defdeltaH} and $0<R_E\le\rho_E=1$, there is then a point $y_E\in\R^N$ such that
$$d_H(E,B_{R_E}(y_E))=\delta_H(E)\,(\rho_E+R_E)\le2\delta_H(E)<\frac12,$$
hence $E\subset B_{R_E+1/2}(y_E)$ up to a negligible set. It follows that $R_E+1/2\ge\rho_E=1$, that is, $R_E\ge1/2$. On the other hand,
$$\baa{rcl}
\displaystyle\delta_1(E)\ \le\ \frac{\lambda(E\!\setminus\!B_{R_E}(y_E))}{\lambda(E)} & \le & \displaystyle\frac{\omega_N(R_E+d_H(E,B_{R_E}(y_E)))^N-\omega_NR_E^N}{\omega_NR_E^N}\vspace{3pt}\\
& \le & \displaystyle\frac{(R_E+2\delta_H(E))^N-R_E^N}{R_E^N}.\eaa$$
Since $1/2\le R_E\le R_E+2\delta_H(E)\le3/2$, one infers from the mean value theorem that $\delta_1(E)\le2^NN(3/2)^{N-1}\times(2\delta_H(E))=4N3^{N-1}\delta_H(E)$.

If now $\delta_H(E)\ge1/4$, then $\delta_1(E)<1\le4\delta_H(E)$. Finally, $\delta_1(E)\le4N3^{N-1}\delta_H(E)$ for every bounded non-negligible Borel set $E$, that is,~\eqref{compar1} holds with $\gamma:=4N3^{N-1}>0$.

For the proof of~(iii), one can restrict without loss of generality to the class of bounded Borel sets $E$ such that $R_E=\sqrt{N}/2$. Consider any such set $E$. Assume first that
$$0<1-\delta_1(E)<\frac{1}{6^N}.$$
Then
$$\frac{1}{2}\times\Big(\frac{1}{(1-\delta_1(E))^{1/N}}-1\Big)-1>\frac{1}{4}\times\frac{1}{(1-\delta_1(E))^{1/N}}>\frac{3}{2},$$
and there is an integer $m_E\ge2$ such that
\be\label{defmE}
\frac{1}{4}\times\frac{1}{(1-\delta_1(E))^{1/N}}<m_E<\frac{1}{2}\times\Big(\frac{1}{(1-\delta_1(E))^{1/N}}-1\Big).
\ee
Now, since any cube with sides of unit length is included into a ball of radius $R_E=\sqrt{N}/2$, one has in particular
$$\max_{k\in\Z^N}\frac{\lambda(E\cap(k+(0,1)^N))}{\lambda(E)}\le\max_{x\in\R^N}\frac{\lambda(E\cap B_{R_E}(x))}{\lambda(E)}=1-\delta_1(E),$$
where the last equality follows from~\eqref{defdelta1}. On the other hand, since $\lambda(E)>0$ and the cubes $k+(0,1)^N$ cover $\R^N$ up to a negligible set as $k$ describes $\Z^N$, there is $k_E\in\Z^N$ such that
$$0<\max_{k\in\Z^N}\lambda(E\cap(k+(0,1)^N))=\lambda(E\cap(k_E+(0,1)^N))\le(1-\delta_1(E))\,\lambda(E).$$
Thus,
$$\lambda\Big(E\cap\bigcup_{k\in\Z^N,\,\|k\|_\infty\le m_E}(k_E+k+(0,1)^N)\Big)\le(2m_E+1)^N\times(1-\delta_1(E))\,\lambda(E).$$
Since $\lambda(E)>0$ and $(2m_E+1)^N(1-\delta_1(E))<1$ from the right inequality in~\eqref{defmE}, it follows that
$$\lambda\Big(E\cap\Big(\R^N\setminus\bigcup_{k\in\Z^N,\,\|k\|_\infty\le m_E}(k_E+k+(0,1)^N)\Big)\Big)>0,$$
hence there is $k'_E\in\Z^N$ with $\|k'_E\|_\infty>m_E$ (that is, $\|k'_E\|_\infty\ge m_E+1$) such that $\lambda(E\cap(k_E+k'_E+(0,1)^N))>0$. Remembering that $\lambda(E\cap(k_E+(0,1)^N))>0$, one gets that $\diam(E)\ge m_E$, hence
$$\rho_E\ge\frac{\diam(E)}{2}\ge\frac{m_E}{2}\ge\frac{1}{8\,(1-\delta_1(E))^{1/N}},$$
by using~\eqref{rhodiam} and the left inequality in~\eqref{defmE}. Together with~\eqref{ineqdeltaH} and the normalization $R_E=\sqrt{N}/2$, one infers that
$$1-\delta_H(E)\le\frac{2R_E}{\rho_E+R_E}\le\frac{2R_E}{\rho_E}\le8\sqrt{N}\,(1-\delta_1(E))^{1/N}.$$

If now $1-\delta_1(E)\ge1/6^N$, then $1-\delta_H(E)\le1\le6\,(1-\delta_1(E))^{1/N}$. Finally, $1-\delta_H(E)\le8\sqrt{N}\,(1-\delta_1(E))^{1/N}$ for every bounded non-negligible Borel set $E$, that is,~\eqref{compar2} holds with $\eta:=8\sqrt{N}$. The proof of Proposition~\ref{pro1} is thereby complete.
\end{proof}


\subsection{Extinction vs. invasion for dilated and highly or weakly fragmented sets}\label{sec42}

This subsection is devoted to the proofs of Propositions~\ref{pro2}-\ref{pro5} on the large-time dynamics of solutions of~\eqref{homo} for close, dilated, highly fragmented, or weakly fragmented indicator sets~$E$ in~\eqref{defu0}.

\begin{proof}[Proof of Proposition~$\ref{pro2}$.] Let us first consider $E$ in $\mathcal{I}$ and let us show that $F\in\mathcal{I}$ for any bounded Borel set $F$ such that $d_1(E,F)$ is small enough. Let $u_E$ and $u_F$ denote the solutions of~\eqref{homo} with initial conditions $\mathds{1}_E$ and $\mathds{1}_F$ respectively. Pick any $\alpha\in(\theta,1)$ with $\theta\in(0,1)$ as in~\eqref{bistable}, remember the definition of $R_{\alpha}>0$ in~\eqref{defRalpha}, and pick any~$R\in(R_{\alpha},+\infty)$. As $u_E(t,\cdot)\to1$ as $t\to+\infty$ locally uniformly in $\R^N$, there is~$T>0$ such that $\min_{\overline{B_R}}u_E(T,\cdot)>\alpha$. Since
\be\label{ineqT}
\|u_E(T,\cdot)-u_F(T,\cdot)\|_{L^\infty(\R^N)}\le\frac{e^{M'T}}{(4\pi T)^{N/2}}\times\|\mathds{1}_E-\mathds{1}_F\|_{L_1(\R^N)}
\ee
from the maximum principle, with $M'=\max_{[0,1]}|f'|$ as in~\eqref{defM'f}, there is then $r>0$ such that $\min_{\overline{B_R}}u_F(T,\cdot)\ge\alpha$ if $d_1(E,F)=\|\mathds{1}_E-\mathds{1}_F\|_{L_1(\R^N)}<r$. Therefore, for every $F\in\mathbb{B}_r(E)$, there holds $1\ge u_F(T,\cdot)\ge\alpha\mathds{1}_{B_R}$ in $\R^N$, hence $u_F(t,\cdot)\to1$ as~$t\to+\infty$ locally uniformly in $\R^N$, from the comparison principle and~\eqref{defRalpha} again. In other words,~$\mathbb{B}_r(E)\subset\mathcal{I}$.

Let us now assume that $E\in\mathcal{E}$. Therefore, there is $T>0$ such that $0\le u_E(T,\cdot)\le\theta/3$ in $\R^N$. From~\eqref{ineqT}, there is then $s>0$ such that $u_F(T,\cdot)\le\theta/2$ in $\R^N$ for every bounded Borel set $F$ satisfying $d_1(E,F)<s$, and then the nonnegative function $u_F$ converges to~$0$ as $t\to+\infty$ uniformly in $\R^N$, from~\eqref{bistable} and the comparison principle. In other words,~$\mathbb{B}_s(E)\subset\mathcal{E}$.
\end{proof}

\begin{proof}[Proof of Proposition~$\ref{pro3}$.] As already underlined in Section~\ref{sec3} after the statement of Proposition~\ref{pro3}, for the proof of~\eqref{defunderovermu}, only the case of dilated sets~$\mu E$ with large $\mu$ and $\mathring{E}=\emptyset$ remains to be dealt with. Let $E$ be such a set. Since~$\lambda(E)>0$ and since
$$\frac{1}{\lambda(B_r(x))}\int_{B_r(x)}\mathds{1}_E(y)\,dy\,\to\,\mathds{1}_E(x)\ \hbox{ as }r\mathop{\to}^>0\hbox{ for almost every $x\in\R^N$}$$
by Lebesgue's differentiation theorem, there is $x_0\in\R^N$ such that $\lambda(E\cap B_r(x_0))\sim\lambda(B_r(x_0))$ as $r\displaystyle\mathop{\to}^>0$. Since~\eqref{homo} is invariant by translation, one can assume without loss of generality that $x_0=0$. Pick any $R>R_{1}$, with $R_{1}$ defined in~\eqref{defRalpha} with $\alpha=1$ (hence, $B_R\in\mathcal{I}$). Since
$$\lambda(B_R)\ge\lambda(\mu E\cap B_R)=\mu^N\lambda(E\cap B_{R/\mu})\mathop{\sim}_{\mu\to+\infty}\mu^N\lambda(B_{R/\mu})=\lambda(B_R),$$
it follows that $d_1(B_R,\mu E\cap B_R)\to0$ as $\mu+\infty$. Therefore, $\mu E\cap B_R\in\mathcal{I}$ for all $\mu>0$ large enough, from Proposition~\ref{pro2}. Finally, $\mu E\in\mathcal{I}$ for all $\mu>0$ large enough, from the comparison principle.

Let us now turn to the proof of the openness of the sets $\{\mu>0:\mu E\in\mathcal{E}\}$ and $\{\mu>0:\mu E\in\mathcal{I}\}$. In the case when the bounded Borel set $E\subset\R^N$ has a negligible boundary for the $N$-dimensional Lebesgue measure $\lambda$, that is, if $\lambda(\partial E)=0$, then, for any~$\mu_0>0$, $\lambda(\partial(\mu_0E))=0$ and
$$\baa{rcl}
d_1(\mu E,\mu_0E) & = & \displaystyle\int_{\R^N}|\mathds{1}_{\mu E}(x)-\mathds{1}_{\mu_0E}(x)|\,dx\vspace{3pt}\\
& = & \displaystyle\int_{\widering{\mu_0E}}|\mathds{1}_{\mu E}(x)-\mathds{1}_{\mu_0E}(x)|\,dx+\int_{\R^N\setminus\overline{\mu_0 E}}|\mathds{1}_{\mu E}(x)-\mathds{1}_{\mu_0E}(x)|\,dx\ \mathop{\longrightarrow}_{\mu\to\mu_0}\ 0\eaa$$
from Lebesgue's dominated convergence theorem. Therefore, in this case, the openness of the sets $\{\mu>0:\mu E\in\mathcal{E}\}$ and $\{\mu>0:\mu E\in\mathcal{I}\}$ follows from Proposition~\ref{pro2}. 

Consider now the case of a general bounded Borel set $E\subset\R^N$. Assume first that~$\mu_0>0$ is such that
$$\mu_0E\in\mathcal{I}.$$
For $\mu>0$, let $u_\mu$ denote here the solution of~\eqref{homo} with initial condition $\mathds{1}_{\mu E}$. As in the proof of Proposition~\ref{pro2}, pick any $\alpha\in(\theta,1)$ and any $R>R'>R_{\alpha}$, with $R_{\alpha}>0$ as in~\eqref{defRalpha}. By hypothesis, there holds $u_{\mu_0}(t,\cdot)\to1$ as~$t\to+\infty$ locally uniformly in $\R^N$. There is then $T>0$ such that $\min_{\overline{B_R}}u_{\mu_0}(T,\cdot)>\alpha$. On the other hand, for every $\mu>0$, the function
$$(t,x)\mapsto v_\mu(t,x):=u_\mu\Big(\frac{\mu^2}{\mu_0^2}t,\frac{\mu}{\mu_0}x\Big)$$
ranges in $[0,1]$ and satisfies
$$\frac{\partial v_\mu}{\partial t}=\Delta v_\mu+\frac{\mu^2}{\mu_0^2}\,f(v_\mu),\ \ t>0,\ x\in\R^N$$
with initial condition $v_\mu(0,\cdot)=\mathds{1}_{\mu_0E}=u_{\mu_0}(0,\cdot)$. Therefore, the function $w_\mu:=v_\mu-u_{\mu_0}$ vanishes at time $t=0$ and satisfies
$$\frac{\partial w_\mu}{\partial t}\le\Delta w_\mu+f(v_\mu)-f(u_{\mu_0})+M\Big|\frac{\mu^2}{\mu_0^2}-1\Big|\le\Delta w_\mu+M'|w_\mu|+M\Big|\frac{\mu^2}{\mu_0^2}-1\Big|$$
in $(0,+\infty)\times\R^N$, with $M:=\max_{[0,1]}|f|$ and $M'=\max_{[0,1]}|f'|$. It then follows from the maximum principle that
$$w_\mu(t,x)\le\frac{M}{M'}\Big|\frac{\mu^2}{\mu_0^2}-1\Big|\times\big(e^{M't}-1\big)\ \hbox{ for all }t\ge 0\hbox{ and }x\in\R^N.$$
By arguing similarly with $u_{\mu_0}-v_\mu$, one gets the same bound from above for $|v_\mu-u_{\mu_0}|$. In particular, at time $t=T$, by rewriting $v_\mu$ in terms of $u_\mu$ and changing $x$ into $(\mu_0/\mu)x$, one infers that
\be\label{ineqTbis}
\Big|u_\mu\Big(\frac{\mu^2}{\mu_0^2}T,x\Big)-u_{\mu_0}\Big(T,\frac{\mu_0}{\mu}x\Big)\Big|\le\frac{M}{M'}\Big|\frac{\mu^2}{\mu_0^2}-1\Big|\times\big(e^{M'T}-1\big)\ \hbox{ for all $x\in\R^N$}.
\ee
As a consequence, remembering that $\min_{\overline{B_R}}u_{\mu_0}(T,\cdot)>\alpha$ and that $R>R'$, there is $\epsilon\in(0,\mu_0)$ such that, if $|\mu-\mu_0|\le\epsilon$, then
$$\min_{\overline{B_{R'}}}u_\mu\Big(\frac{\mu^2}{\mu_0^2}T,\cdot\Big)\ge\alpha,$$
hence $1\ge u_\mu((\mu^2/\mu_0^2)T,\cdot)\ge\alpha\mathds{1}_{B_{R'}}$ in $\R^N$. Since $R'>R_{\alpha}$, one concludes from~\eqref{defRalpha} and the maximum principle that $u_{\mu}(t,\cdot)\to1$ as $t\to+\infty$ locally uniformly in $\R^N$ for each~$\mu\in(\mu_0-\varepsilon,\mu_0+\varepsilon)$, that is, $\mu E\in\mathcal{I}$ for any such $\mu$. From the arbitrariness of $\mu_0$ such that $\mu_0E\in\mathcal{I}$, the openness of $\{\mu>0:\mu E\in\mathcal{I}\}$ has been shown. 

Let now $\mu_0>0$ be such that
$$\mu_0E\in\mathcal{E},$$
and let $T>0$ be such that $0\le u_{\mu_0}(T,\cdot)\le\theta/3$ in $\R^N$. The inequality~\eqref{ineqTbis}, which holds independently of the hypothesis $\mu_0E\in\mathcal{E}$, provides the existence of $\epsilon\in(0,\mu_0)$ such that, if $|\mu-\mu_0|\le\epsilon$, then $u_\mu((\mu^2/\mu_0^2)T,\cdot)\le\theta/2$ in $\R^N$, hence $u_\mu(t,\cdot)\to0$ as $t\to+\infty$ uniformly in $\R^N$, from~\eqref{bistable} and the comparison principle. Finally, $\mu E\in\mathcal{E}$ for all $\mu\in(\mu_0-\epsilon,\mu_0+\epsilon)$, and the set $\{\mu>0:\mu E\in\mathcal{E}\}$ is open.

Lastly, property~\eqref{transition} immediately follows from~\eqref{defunderovermu} and the openness of the sets~$\{\mu>0:\mu E\in\mathcal{E}\}$ and $\{\mu>0:\mu E\in\mathcal{I}\}$, together with the fact that $\mu E\in\mathcal{E}\cup\mathcal{T}\cup\mathcal{I}$ for every $\mu>0$. The proof of Proposition~\ref{pro3} is thereby complete.
\end{proof}

\begin{proof}[Proof of Proposition~$\ref{pro4}$.] Let $m>0$ be given, define
$$s:=\frac{1}{\sqrt{N}}\times\Big(\frac{m}{\omega_N}\Big)^{1/N}>0,$$
and observe that any cube of measure $(2s)^N$ is included into a ball of measure $m$. Consider now any bounded Borel set $E$ with $\lambda(E)=m$, and let $u$ be the solution of~\eqref{homo} with initial condition~\eqref{defu0}. Formula~\eqref{defdelta1} and the previous observations then imply that~$\lambda(Q\cap E)\le m\,(1-\delta_1(E))$ for every cube~$Q$ of measure $(2s)^N$. Therefore, remembering the definition $M'=\max_{[0,1]}|f'|$, it follows from the comparison principle that, for every~$x\in\R^N$,
$$\baa{rcl}
\displaystyle 0\ \le\ u(1,x) & \le & \displaystyle \frac{e^{M'}}{(4\pi)^{N/2}}\times\int_Ee^{-|x-y|^2/4}dy\vspace{3pt}\\
& = & \displaystyle\frac{e^{M'}}{(4\pi)^{N/2}}\times\sum_{k\in\Z^N}\int_{E\cap(x+2ks+(-s,s)^N)}e^{-|x-y|^2/4}dy\vspace{3pt}\\
& \le & \displaystyle\frac{e^{M'}}{(4\pi)^{N/2}}\times m\times(1-\delta_1(E))\times\sum_{k\in\Z^N}e^{-\max(2\|k\|_\infty-1;0)^2s^2/4},\eaa$$
where the above two series converge. Since the right-hand side of the last inequality does not depend on $x$, there is then $\epsilon_{m}>0$ such that $0\le u(1,\cdot)\le\theta/2$ in $\R^N$ as soon as~$\delta_1(E)\ge1-\epsilon_{m}$, hence $u(t,\cdot)\to0$ as $t\to+\infty$ uniformly in $\R^N$. In other words, every bounded Borel set $E$ such that $\lambda(E)=m$ and $\delta_1(E)\ge1-\epsilon_{m}$ belongs to $\mathcal{E}$.
\end{proof}

\begin{proof}[Proof of Proposition~$\ref{pro5}$.] Fix any $m>\lambda(B_{R_{1}})=\omega_NR_{1}^N$. Define $\alpha=(\theta+1)/2\in(\theta,1)$ and $R=R_{\alpha}+1$, with $\theta\in(0,1)$ and $R_{\alpha}$ given in~\eqref{bistable} and~\eqref{defRalpha}. Let $v$ denote the solution of~\eqref{homo} with initial condition $\mathds{1}_{B_{(m/\omega_N)^{1/N}}}$. Since $(m/\omega_N)^{1/N}>R_{1}$ by assumption, it follows from~\eqref{defRalpha} that $v(t,\cdot)\to1$ as $t\to+\infty$ locally uniformly in $\R^N$, hence there is~$T>0$ such that $\min_{\overline{B_R}}v(T,\cdot)>\alpha$. Consider now any bounded Borel set $E$ such that~$\lambda(E)=m$ and, from~\eqref{defdelta1}, let $z_E\in\R^N$ be such that
$$\|\mathds{1}_E-\mathds{1}_{B_{(m/\omega_N)^{1/N}}(z_E)}\|_{L_1(\R^N)}=2\lambda(E\setminus B_{(m/\omega_N)^{1/N}}(z_E))=2\lambda(E)\delta_1(E)=2m\delta_1(E).$$
As in~\eqref{ineqT}, the solution $u_E$ of~\eqref{homo} with initial condition $\mathds{1}_E$ satisfies
$$\|u_E(T,\cdot)-v(T,\cdot-z_E)\|_{L^\infty(\R^N)}\le\frac{e^{M'T}}{(4\pi T)^{N/2}}\times\|\mathds{1}_E-\mathds{1}_{B_{(m/\omega_N)^{1/N}}(z_E)}\|_{L_1(\R^N)},$$
hence $\|u_E(T,\cdot+z_E)-v(T,\cdot)\|_{L^\infty(\R^N)}\le 2e^{M'T}(4\pi T)^{-N/2}m\times\delta_1(E)$. Observing that~$\alpha$ and~$T$ only depend on~$f$ and $m$ (and on the dimension~$N$), and remembering that~$\min_{\overline{B_R}}v(T,\cdot)>\alpha$, there is then $\eta_{m}>0$ such that $\min_{\overline{B_R}}u_E(T,\cdot+z_E)\ge\alpha$ as soon as $\delta_1(E)\le\eta_{m}$. For any such $E$, one then has $u_E(T,\cdot+z_E)\ge\alpha\mathds{1}_{B_R}$, hence $u_E(t,\cdot)\to1$ as $t\to+\infty$ locally uniformly in $\R^N$, by~\eqref{defRalpha} and the definition $R=R_{\alpha}+1$. Therefore,  any bounded Borel set $E$ such that $\lambda(E)=m$ and $\delta_1(E)\le\eta_{m}$ belongs to $\mathcal{I}$.
\end{proof}


\subsection{Proof of Theorem~\ref{th1}}\label{sec43}

We provide two different proofs of Theorem~\ref{th1}. The first one holds in any dimension $N\ge1$ and is based on homogenization techniques, and the constructed sets $E_1$ and $E_2$ have similarities with the sets~$F_n$ defined in~\eqref{defFn}. The alternate proof holds in dimensions $N\ge2$, and the constructed sets involve cubes and their intersections with balls.

\begin{proof}[Proof of Theorem~$\ref{th1}$ in any dimension $N\ge1$]
Consider any $\alpha\in(\theta,1)$, with $\theta\in(0,1)$ as in~\eqref{bistable}. Fix then some real numbers $R$ and $R'$, close enough to $R_\alpha$, such that
\be\label{defRR'0}
0<R'<R_\alpha<R,\ \ \alpha^{1/N}R<R',\ \hbox{ and }\ 0<\alpha\,R^N-\alpha\,{R'}^N<\min\Big(\frac{R_1^N}{2^N},\alpha^2R^N\big),
\ee
with $R_\alpha>0$ and $R_1>0$ as in~\eqref{defRalpha}. Owing to the definition of $R_\alpha$, the solution $\tilde{v}$ of~\eqref{homo} with initial condition $\alpha\,\mathds{1}_{B_{R'}}$ converges to $0$ as $t\to+\infty$ uniformly in $\R^N$, thus there is $T>0$ such that $\tilde{v}(T,\cdot)\le\theta/3$ in $\R^N$. For $\beta\in(\alpha,1]$, let $\tilde{v}^\beta$ be the solution of~\eqref{homo} with initial condition $\beta\,\mathds{1}_{B_{R'}}$. Since $0\le \tilde{v}^\beta(T,\cdot)-\tilde{v}(T,\cdot)\le e^{M'T}(4\pi T)^{-N/2}\|\beta\,\mathds{1}_{B_{R'}}-\alpha\,\mathds{1}_{B_{R'}}\|_{L^1(\R^N)}$ from the maximum principle, with $M'=\max_{[0,1]}|f'|$ as in~\eqref{defM'f}, one can choose $\beta\in(\alpha,1)$, close enough to $\alpha$, so that $0\le\tilde{v}^\beta(T,\cdot)\le\theta/2$ in~$\R^N$, hence
\be\label{vbeta}
\tilde{v}^\beta(t,\cdot)\to0\hbox{ as $t\to+\infty$ uniformly in~$\R^N$}
\ee
by~\eqref{bistable} and the maximum principle. Even if it means decreasing $\beta$ in $(\alpha,1)$, one can also assume without loss of generality that
\be\label{defRR'}
0<\alpha\,R^N-\beta\,{R'}^N<\min\Big(\frac{R_1^N}{2^N},\alpha^2R^N\Big).
\ee

Consider the following bounded Borel sets
$$F_n:=\bigcup_{z\in(\Z^N\!/n)\cap B_{R}}\Big[z+\Big(\!\!-\!\frac{\alpha^{1/N}}{2n},\frac{\alpha^{1/N}}{2n}\Big)^N\Big]$$
and
$$G_n:=\bigcup_{z\in(\Z^N\!/n)\cap B_{R'}}\Big[z+\Big(\!\!-\!\frac{\beta^{1/N}}{2n},\frac{\beta^{1/N}}{2n}\Big)^N\Big]$$
for $n\ge 1$. The sets $F_n$ and $G_n$ are such that (see~\eqref{defFn} and below)
\be\label{limits1}
\lim_{n\to+\infty}\lambda(F_n)=\alpha\,\omega_NR^N>\beta\,\omega_N{R'}^N=\lim_{n\to+\infty}\lambda(G_n)
\ee
and
\be\label{FnGn}
\lim_{n\to+\infty}\delta_1(F_n)=1-\alpha>1-\beta=\lim_{n\to+\infty}\delta_1(G_n)>0.
\ee
Together with~\eqref{defRR'}, there is then $n_1\in\N$ such that
\be\label{defn1}
0<\lambda(G_n)<\lambda(F_n)\ \hbox{ and }\ \lambda(F_n)-\lambda(G_n)<\min\Big(\omega_N\frac{R_1^N}{2^N},\alpha^2\omega_NR^N\Big)
\ee
for all $n\ge n_1$.

Let then $u_n$, $v_n$, $u$ and $v=\tilde{v}^\beta$ be the solutions of~\eqref{homo} with respective initial conditions $\mathds{1}_{F_n}$, $\mathds{1}_{G_n}$, $\alpha\,\mathds{1}_{B_R}$ and $\beta\,\mathds{1}_{B_{R'}}$. Let similarly $U_n$, $V_n$, $U$ and $V$ be the solutions of the heat equation $\frac{\partial z}{\partial t}=\Delta z$ with respective initial conditions $\mathds{1}_{F_n}$, $\mathds{1}_{G_n}$, $\alpha\,\mathds{1}_{B_R}$ and $\beta\,\mathds{1}_{B_{R'}}$. We point out that these 8 functions are nonnegative, and that $u_n$, $v_n$, $u$ and $v$ range in $[0,1]$. Since~$\|\alpha\mathds{1}_{B_R}-(\alpha-\epsilon)\mathds{1}_{B_{R-\epsilon}}\|_{L^1(\R^N)}\to0$ as $\epsilon\to0$, and since $u(t,\cdot)\to1$ as $t\to+\infty$ locally uniformly in $\R^N$ by~\eqref{defRalpha} (since $R>R_\alpha)$, it follows as in the proof of Proposition~\ref{pro2} that there is $\epsilon_1\in(0,R)$ such that the solution $u^{\epsilon_1}$ of~\eqref{homo} with initial condition
$$u^{\epsilon_1}_0:=(\alpha-\epsilon_1)\mathds{1}_{B_{R-\epsilon_1}}$$
converges to $1$ as $t\to+\infty$ locally uniformly in $\R^N$. Notice also that $U(t,\cdot)\to\alpha$ as $t\to0$ locally uniformly in $B_R$. As a consequence, there is $t_1>0$ small enough such that
$$U(t_1,\cdot)\ge\Big(\alpha-\frac{\epsilon_1}{3}\Big)\mathds{1}_{B_{R-\epsilon_1}}\ \hbox{ in }\R^N.$$
Even if it means decreasing $t_1>0$, one can assume without loss of generality that~$(\alpha-\epsilon_1/2)e^{-M't_1}\ge\alpha-\epsilon_1$. Now, owing to the definition of $F_n$, it follows by homo\-genization that
$$\baa{rcl}
0\ \le\ U_n(t_1,x) & = & \displaystyle\frac{1}{(4\pi t_1)^{N/2}}\sum_{z\in(\Z^N\!/n)\cap B_R}\int_{z+(-\alpha^{1/N}\!/(2n),\,\alpha^{1/N}\!/(2n))^N}e^{-|x-y|^2/(4t_1)}\,dy\vspace{3pt}\\
& \displaystyle\mathop{\longrightarrow}_{n\to+\infty} & \displaystyle\frac{1}{(4\pi t_1)^{N/2}}\int_{B_R}\alpha\,e^{-|x-y|^2/(4t_1)}\,dy=U(t_1,x)\eaa$$
uniformly with respect to $x\in\R^N$. Thus, there is $n_2\ge n_1$ such that
$$U_n(t_1,\cdot)\ge\Big(\alpha-\frac{\epsilon_1}{2}\Big)\mathds{1}_{B_{R-\epsilon_1}}\ \hbox{ in }\R^N\hbox{ for all }n\ge n_2.$$
Since $1\ge u_n(t_1,\cdot)\ge e^{-M't_1}U_n(t_1,\cdot)$ in $\R^N$ from the maximum principle, one infers that~$1\ge u_n(t_1,\cdot)\ge(\alpha-\epsilon_1)\mathds{1}_{B_{R-\epsilon_1}}=u^{\epsilon_1}_0$ in $\R^N$ for all $n\ge n_2$, hence $u_n(t,\cdot)\to1$ as~$t\to+\infty$ locally uniformly in $\R^N$. In other words,
\be\label{FnI}
F_n\in\mathcal{I}\ \hbox{ for all $n\ge n_2$}.
\ee

Consider now the solution $v=\tilde{v}^\beta$ of~\eqref{homo} with initial condition $\beta\,\mathds{1}_{B_{R'}}$. By~\eqref{vbeta}, there is $T'>0$ such that $0\le v(T',\cdot)\le\theta/3$ in $\R^N$. For $\epsilon\in(0,1-\beta)$, let $v^\epsilon$ be the solution of~\eqref{homo} with initial condition
$$v^\epsilon_0:=(\beta+\epsilon)\,\mathds{1}_{B_{R'+\epsilon}}+\epsilon\,\mathds{1}_{\R^N\setminus B_{R'+\epsilon}}.$$
Since
$$\baa{r}
\displaystyle 0\!\le\!v^\epsilon(T',x)\!-\!v(T',x)\!\le\!\frac{e^{M'T'}}{(4\pi T')^{N/2}}\Big[\int_{B_{R'}}\!\!\!\epsilon\,e^{-|x-y|^2/(4T')}dy+\!\!\int_{B_{R'+\epsilon}\setminus B_{R'}}\!\!\!(\beta\!+\!\epsilon)\,e^{-|x-y|^2/(4T')}dy\vspace{3pt}\\
\displaystyle+\int_{\R^N\setminus B_{R'+\epsilon}}\epsilon\,e^{-|x-y|^2/(4T')}dy\Big]\eaa$$
for all $x\in\R^N$, one infers that $v^\epsilon(T',\cdot)\to v(T',\cdot)$ uniformly in $\R^N$ as $\epsilon\to0$. Thus, there is $\epsilon_2\in(0,1-\beta)$ such that~$0\le v^{\epsilon_2}(T',\cdot)\le\theta/2$ in $\R^N$, hence $v^{\epsilon_2}(t,\cdot)\to0$ as $t\to+\infty$ uniformly in~$\R^N$. On the other hand, $\sup_{B_{R'+\epsilon_2}}v(t,\cdot)\to\beta$ and $\sup_{\R^N\setminus B_{R'+\epsilon_2}}v(t,\cdot)\to0$ as $t\to0$, while~$V(t,\cdot)\le e^{M't}v(t,\cdot)$ in $\R^N$ for all $t\ge0$, from the maximum principle. As a consequence, there is~$t_2>0$ (independent of $n$) small enough such that
$$V(t_2,\cdot)\le\Big(\beta+\frac{\epsilon_2}{3}\Big)\mathds{1}_{B_{R'+\epsilon_2}}+\frac{\epsilon_2}{3}\mathds{1}_{\R^N\setminus B_{R'+\epsilon_2}}.$$
Even if it means decreasing $t_2>0$, one can also assume without loss of generality that~$(\beta+\epsilon_2/2)e^{M't_2}\le\beta+\epsilon_2$ and $(\epsilon_2/2)e^{M't_2}\le\epsilon_2$. Now, owing to the definition of~$G_n$, there holds that $V_n(t_2,\cdot)\to V(t_2,\cdot)$ uniformly in $\R^N$ as $n\to+\infty$ by homogenization, as for $U_n(t_1,\cdot)$ and $U(t_1,\cdot)$ above. Therefore, there is~$n_3\ge n_2$ such that
$$V_n(t_2,\cdot)\le\Big(\beta+\frac{\epsilon_2}{2}\Big)\mathds{1}_{B_{R'+\epsilon_2}}+\frac{\epsilon_2}{2}\,\mathds{1}_{\R^N\setminus B_{R'+\epsilon_2}}\ \hbox{ in }\R^N\hbox{ for all }n\ge n_3.$$
As $0\le v_n(t_2,\cdot)\le e^{M't_2}V_n(t_2,\cdot)$ in $\R^N$ from the maximum principle, one infers that~$0\le v_n(t_2,\cdot)\le(\beta+\epsilon_2)\mathds{1}_{B_{R'+\epsilon_2}}+\epsilon_2\mathds{1}_{\R^N\setminus B_{R'+\epsilon_2}}=v^{\epsilon_2}_0$ in $\R^N$ for all $n\ge n_3$, hence
\be\label{vn}
0\le v_n(t_2+t,\cdot)\le v^{\epsilon_2}(t,\cdot)\hbox{ in $\R^N$ for all $t\ge0$ and $n\ge n_3$}
\ee
by the maximum principle. In particular, $v_n(t,\cdot)\to0$ as $t\to+\infty$ uniformly in $\R^N$, that is, $G_n\in\mathcal{E}$, for all $n\ge n_3$.

Remember~\eqref{defn1} and define
\be\label{defrhon}
0<\rho_n:=\Big(\frac{\lambda(F_n)-\lambda(G_n)}{\omega_N}\Big)^{1/N}<\frac{R_1}{2}
\ee
for $n\ge n_1$. Consider finally a sequence of points $(x_n)_{n\ge n_1}$ in $\R^N$ such that $B_{\rho_n}(x_n)\cap G_n=\emptyset$ for every $n\ge n_1$, and $|x_n|\to+\infty$ as $n\to+\infty$. The bounded Borel sets
$$H_n:=G_n\cup B_{\rho_n}(x_n)$$
satisfy
\be\label{HnFn}
\lambda(H_n)=\lambda(F_n)>0\ \hbox{ for all $n\ge n_1$}.
\ee
Since $\rho_n<R_1/2$, all the balls $B_{\rho_n}(x_n)$ belong to $\mathcal{E}$, by definition of $R_1$.

For the Cauchy problem~\eqref{homo} with the initial condition~$\mathds{1}_{H_n}$, the two components $\mathds{1}_{G_n}$ and $\mathds{1}_{B_{\rho_n}(x_n)}$ act as almost independent initial conditions for $n$ large enough and then~$H_n$ will belong to $\mathcal{E}$ for every $n$ large enough (as in the example of the sets $D_a$ given in the first paragraph of Section~\ref{sec2}). More precisely, to show this fact, fix first $\rho>0$ such that $G_n\subset B_\rho$ and $\rho_n\le\rho$ for all $n\ge n_1$. Denote $w$, $w_n$, and $z_n$, the solutions of~\eqref{homo} with initial conditions $\mathds{1}_{B_{R_1/2}}$, $\mathds{1}_{B_{\rho_n}(x_n)}$, and $\mathds{1}_{H_n}$, respectively. From~\eqref{vn}-\eqref{defrhon} and $\lim_{t\to+\infty}\|v^{\epsilon_2}(t,\cdot)\|_{L^\infty(\R^N)}=0$, together with the definition of $R_1$, there is $\tau>0$ such that
\be\label{deftau}
0\le v_n(\tau,\cdot)\le\frac{\theta}{3}\ \hbox{ and }\ 0\le w_n(\tau,\cdot)\le w(\tau,\cdot-x_n)\le\frac{\theta}{3}\ \hbox{ in $\R^N,\ $ for all $n\ge n_3$}.
\ee
Furthermore, since $H_n=G_n\cup B_{\rho_n}(x_n)$, the maximum principle yields
$$\baa{rcl}
0\le v_n(\tau,x)\le z_n(\tau,x) & \!\!\le\!\! & \displaystyle v_n(\tau,x)+\frac{e^{M'\tau}}{(4\pi\tau)^{N/2}}\int_{B_{\rho_n}(x_n)}e^{-|x-y|^2/(4\tau)}dy\vspace{3pt}\\
& \!\!\le\!\! & \displaystyle v_n(\tau,x)+\frac{e^{M'\tau}}{(4\pi\tau)^{N/2}}\int_{B_{\rho}}e^{-|x-x_n-y|^2/(4\tau)}dy\eaa$$
and
$$\baa{rcl}
0\le w_n(\tau,x)\le z_n(\tau,x) & \!\!\le\!\! & \displaystyle w_n(\tau,x)+\frac{e^{M'\tau}}{(4\pi\tau)^{N/2}}\int_{G_n}e^{-|x-y|^2/(4\tau)}dy\vspace{3pt}\\
& \!\!\le\!\! & \displaystyle v_n(\tau,x)+\frac{e^{M'\tau}}{(4\pi\tau)^{N/2}}\int_{B_{\rho}}e^{-|x-y|^2/(4\tau)}dy\eaa$$
for all $x\in\R^N$ and $n\ge n_1$. Let $\sigma>0$ be such that
$$\frac{e^{M'\tau}}{(4\pi\tau)^{N/2}}\int_{B_{\rho}}e^{-|\xi-y|^2/(4\tau)}dy\le\frac{\theta}{6}\ \hbox{ for all }|\xi|\ge\sigma.$$
Together with~\eqref{deftau}, it follows that $0\le z_n(\tau,x)\le\theta/2$ for all $n\ge n_3$ and for all $x\in\R^n$ such that either $|x-x_n|\ge\sigma$ or $|x|\ge\sigma$. Since $|x_n|\to+\infty$ as $n\to+\infty$, there is $n_4\ge n_3$ such that $0\le z_n(\tau,x)\le\theta/2$ for all $n\ge n_4$ and $x\in\R^N$, and then $z_n(t,\cdot)\to0$ as $t\to+\infty$ uniformly in $\R^N$. In other words,
\be\label{HnE}
H_n\in\mathcal{E}\ \hbox{ for all $n\ge n_4$}.
\ee

Let us finally estimate $\delta_1(H_n)$ for large $n$. Since $0<\lambda(H_n)=\lambda(F_n)\to\alpha\omega_NR^N$ as $n\to+\infty$ by~\eqref{limits1}, one has
\be\label{RHn}
0<R_{H_n}\to\alpha^{1/N}R<R'\ \hbox{ as $n\to+\infty$},
\ee
where the inequality $\alpha^{1/N}R<R'$ holds by~\eqref{defRR'0}. From this, $G_n\subset B_\rho$, $|x_n|\to+\infty$ and $\rho_n<R_1/2$, there is then $n_5\ge n_4$ such that, for every $n\ge n_5$ and $y\in\R^N$, one has either $G_n\cap B_{R_{H_n}}(y)=\emptyset$ or $B_{\rho_n}(x_n)\cap B_{R_{H_n}}(y)=\emptyset$. Consequently,
$$\max_{y\in\R^N}\lambda\big(H_n\cap B_{R_{H_n}}(y)\big)=\max\Big(\max_{y\in\R^N}\lambda\big(G_n\cap B_{R_{H_n}}(y)\big),\max_{y\in\R^N}\lambda\big(B_{\rho_n}(x_n)\cap B_{R_{H_n}}(y)\big)\Big)$$
for all $n\ge n_5$. But $\lambda(B_{\rho_n}(x_n))=\lambda(F_n)-\lambda(G_n)<\alpha^2\omega_NR^N<\alpha\beta\omega_NR^N$ for all $n\ge n_1$ by~\eqref{defn1}, while $\lambda(G_n\cap B_{R_{H_n}})\to\beta\omega_N\alpha R^N$ as $n\to+\infty$ by~\eqref{RHn} and the definition of $G_n$. Therefore, there is $n_6\ge n_5$ such that
$$\max_{y\in\R^N}\lambda\big(H_n\cap B_{R_{H_n}}(y)\big)=\max_{y\in\R^N}\lambda\big(G_n\cap B_{R_{H_n}}(y)\big)$$
for all $n\ge n_6$, and then
$$\lim_{n\to+\infty}\max_{y\in\R^N}\lambda\big(H_n\cap B_{R_{H_n}}(y)\big)=\lim_{n\to+\infty}\lambda(G_n\cap B_{R_{H_n}})=\alpha\beta\omega_NR^N.$$
Therefore,
$$\delta_1(H_n)\to1-\frac{\alpha\beta\omega_NR^N}{\alpha\omega_NR^N}=1-\beta\ \hbox{ as }n\to+\infty.$$
From~\eqref{FnGn}, there is then $n_7\ge n_6$ such that
$$\delta_1(F_n)>\delta_1(H_n)>0\ \hbox{ for all $n\ge n_7$}.$$
Together with~\eqref{FnI},~\eqref{HnFn} and~\eqref{HnE}, one concludes that, for each $n\ge n_7$, the sets $E_1:=F_n$ and $E_2:=H_n$ satisfy the desired properties of Theorem~\ref{th1}, completing its proof.
\end{proof}

\begin{proof}[Alternate proof of Theorem~$\ref{th1}$ in dimensions $N\ge2$]
We assume here that $N\ge2$. For $a>0$, let
$$Q_a:=\Big(-\frac{a}{2},\frac{a}{2}\Big)^N$$
be the cube of $\R^N$ centered at the origin and with sides of length $a$. For the solutions of~\eqref{homo} with the initial condition~$\mathds{1}_{Q_a}$, it follows from~\cite{p1} that there is a unique threshold~$a^*>0$ between extinction and invasion, that is,
\be\label{defa*}
Q_a\in\mathcal{E}\hbox{ if $0<a<a^*$},\ \ Q_{a^*}\in\mathcal{T},\ \hbox{ and }\ Q_a\in\mathcal{I}\hbox{ if $a>a^*$}.
\ee

For $a>0$ and $r>0$, define
$$C_{a,r}=Q_a \cap B_r.$$
For each fixed $a>0$, the family $C_{a,r}$ is continuously increasing in $L^1(\R^N)$ with respect to $r\in(0,a\sqrt{N}/2]$, while, for each fixed $r>0$, the family $C_{a,r}$ is continuously increasing in~$L^1(\R^N)$ with respect to $a\in(0,2r]$. Consider
$$a=a^*+\epsilon,$$
with $\epsilon>0$. For $r\ge a\sqrt{N}/2=(a^*+\epsilon)\sqrt{N}/2$, one has $C_{a,r}=Q_a$, hence by~\eqref{defa*} the solution of~\eqref{homo} with initial condition $\mathds{1}_{C_{a,r}}$ converges to $1$ as $t\to+\infty$ locally uniformly in~$\R^N$. On the other hand, for $r>0$ small enough, one has $C_{a,r}=B_r$ and extinction occurs, that is, $C_{a,r}\in\mathcal{E}$. Thus, by~\cite{p1} again, there is a threshold value
$$r^*(\varepsilon)\in\Big(0,\frac{a\sqrt{N}}{2}\Big)=\Big(0,\frac{(a^*+\epsilon)\sqrt{N}}{2}\Big)$$
such that
$$C_{a,r}\in\mathcal{E}\hbox{ if $0<r<r^*(\epsilon)$},\ \ C_{a,r^*(\epsilon)}\in\mathcal{T},\ \hbox{ and }\ C_{a,r}\in\mathcal{I}\hbox{ if $r>r^*(\epsilon)$}.$$ 

When $\varepsilon\to 0$, we observe that $r^*(\varepsilon)\to a^*\sqrt{N}/2$. Indeed, on the one hand, $r^*(\epsilon)<(a^*+\epsilon)\sqrt{N}/2$ for each $\epsilon>0$, hence $\limsup_{\epsilon\to0,\,\epsilon>0}r^*(\epsilon)\le a^*\sqrt{N}/2$. On the other hand, for each $r\in(0,a^*\sqrt{N}/2)$, one has $C_{a^*,r}\subset Q_{a^*}$ and $\lambda(Q_{a^*}\setminus C_{a^*,r})>0$, hence~$C_{a^*,r}\in\mathcal{E}$ by~\cite{p1} and then~$C_{a^*+\epsilon,r}\in\mathcal{E}$ for all $\epsilon>0$ small enough by Proposition~\ref{pro2}. Finally, for each $r\in(0,a^*\sqrt{N}/2)$, one has~$r<r^*(\epsilon)$ for all $\epsilon>0$ small enough, hence $\liminf_{\epsilon\to0,\,\epsilon>0}r^*(\epsilon)\ge a^*\sqrt{N}/2$, and therefore
\be\label{r*0}
\lim_{\epsilon\to0,\ \epsilon>0}r^*(\epsilon)= \frac{a^*\sqrt{N}}{2}.
\ee
When $\epsilon>0$ is large enough, so that $Q_{a^*+\epsilon}\supset B_{R_{1}}$ with $R_{1}>0$ as in~\eqref{defRalpha}, then~$r^*(\epsilon)=R_{1}$. As the family $C_{a^*+\epsilon,r}$ is nondecreasing with respect to $\varepsilon>0$, the function $\varepsilon\mapsto r^*(\varepsilon)$ is nonincreasing in $(0,+\infty)$. This map is also continuous in~$(0,+\infty)$, with similar arguments as above and by using Proposition~\ref{pro2} again. Notice also that~$\omega_N=\lambda(B_1)>(2/\sqrt{N})^N$ (because $B_1\supset Q_{2/\sqrt{N}}$ and $\lambda(B_1\setminus Q_{2/\sqrt{N}})>0$, the assumption~$N\ge2$ is used here!). One can then choose $\sigma\in(1/\omega_N^{1/N},\sqrt{N}/2)$ sufficiently close to~$\sqrt{N}/2$ so that
$$0<\lambda(Q_{a^*})-\lambda(C_{a^*,\sigma a^*})<\frac{\omega_N{a^*}^N}{2^N}.$$
As $0<\sigma<\sqrt{N}/2$, it follows from~\eqref{r*0} and the above continuity and monotonicity properties of the map $\epsilon\mapsto r^*(\epsilon)$, that there is a unique $\epsilon^*>0$ such that
$$r^*(\varepsilon^*)=\sigma\,(a^*+\varepsilon^*).$$

Fix now $\beta\in(0,1)$ small enough so that
\be\label{defbeta}
0<\lambda(Q_{a^*+\beta\epsilon^*})-\lambda(C_{a^*+\beta\epsilon^*,\sigma(a^*+\beta\epsilon^*)})<\frac{\omega_N{a^*}^N}{2^N},
\ee
define
\be\label{defr}
0<\frac{a^*+\beta\epsilon^*}{\omega_N^{1/N}}<r:=\sigma\,(a^*+\beta\epsilon^*)<\frac{(a^*+\beta\epsilon^*)\sqrt{N}}{2}
\ee
and choose $\eta\in(0,\beta)$ close enough to $\beta$ so that
\be\label{defeta1}
0<\lambda(Q_{a^*+\eta\epsilon^*})-\lambda(C_{a^*+\beta\epsilon^*,r})<\frac{\omega_N{a^*}^N}{2^N},
\ee
and
\be\label{defeta2}
\frac{a^*+\beta\epsilon^*}{2}<\frac{a^*+\eta\epsilon^*}{\omega_N^{1/N}}
\ee
(such a choice is possible because of~\eqref{defbeta} and because $\omega_N<2^N$, due to the assumption $N\ge 2$). Define
$$E_1:=Q_{a^*+\eta\epsilon^*}\ \hbox{ and }\ E_2:=C_{a^*+\beta\epsilon^*,r}\cup Q^x,$$
where $Q^x$ denotes the cube of measure $\lambda(Q^x):=\lambda(Q_{a^*+\eta\epsilon^*})-\lambda(C_{a^*+\beta\epsilon^*,r})>0$ and centered at $x$ with $|x|$ large enough so that $C_{a^*+\beta\epsilon^*,r}\cap Q^x=\emptyset$. Let us check that the bounded Borel sets $E_1$ and $E_2$ fulfill the desired conclusions of Theorem~\ref{th1}, provided $|x|$ is large enough (see Figure~\ref{fig:geom}).
\begin{figure}
\centering
\includegraphics[width=0.6\textwidth]{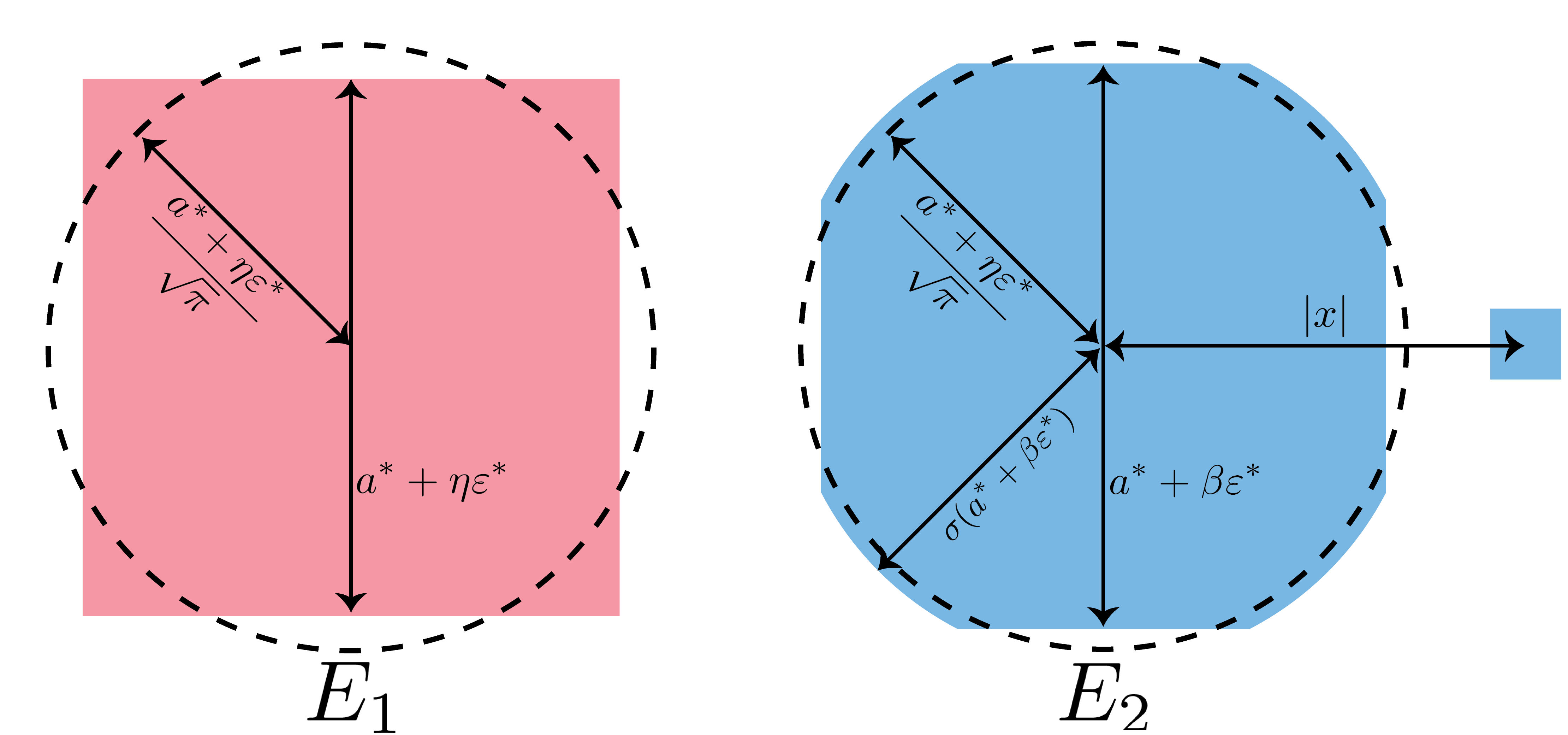}
\caption{\small{The two sets $E_1$ (in red) and $E_2$ (in blue) have the same Lebesgue measure $\lambda(E_1)=\lambda(E_2)$. Invasion occurs for~\eqref{homo} with initial condition $\mathds{1}_{E_1}$ but not with $\mathds{1}_{E_2}$. To understand why $0<\delta_1(E_2)<\delta_1(E_1)$ for all $|x|$ large enough, observe that the value of $\max_{B\in\mathcal{B},\,\lambda(B)=\lambda(E_1)}\lambda(E_1\cap B)$ corresponds in dimension $N=2$ to the measure $\lambda(E_1\cap\tilde{B})$ of the intersection between $E_1$ and the disk $\tilde{B}$ inside the dashed circle. For $|x|$ large enough, the value of $\max_{B\in\mathcal{B},\,\lambda(B)=\lambda(E_2)}\lambda(E_2\cap B)$ simply corresponds to the measure of $\lambda(E_2\cap \tilde{B})$, and is higher than $\lambda(E_1\cap \tilde{B})=\max_{B\in\mathcal{B},\,\lambda(B)=\lambda(E_1)}\lambda(E_1\cap B)$, hence $\delta_1(E_2)<\delta_1(E_1)$. The details are provided below.}}
\label{fig:geom}
\end{figure}

First of all, by definition of $a^*$ in~\eqref{defa*}, invasion occurs for the solution of~\eqref{homo} with the initial condition $\mathds{1}_{E_1}$, that is,~$E_1\in\mathcal{I}$. By definition of $r^*(\varepsilon^*)$, extinction occurs for the solution of~\eqref{homo} with the initial condition $\mathds{1}_{C_{a^*+\beta\varepsilon^*,r}}$, as
$$r=\sigma\,(a^*+\beta\varepsilon^*)<\sigma\,(a^*+\varepsilon^*)=r^*(\varepsilon^*)\le r^*(\beta\epsilon^*),$$
that is, $C_{a^*+\beta\varepsilon^*,r}\in\mathcal{E}$. Additionally,
\be\label{lambdaQx}
0<\lambda(Q^x)=\lambda(Q_{a^*+\eta\epsilon^*})-\lambda(C_{a^*+\beta\epsilon^*,r})<\frac{\omega_N{a^*}^N}{2^N}<{a^*}^N
\ee
by~\eqref{defeta1} and $\omega_N<2^N$. As a consequence, by definition of $a^*$ in~\eqref{defa*} and the invariance of~\eqref{homo} by translation, extinction occurs for the solutions of~\eqref{homo} with the initial conditions~$\mathds{1}_{Q^x}$ (that is, $Q^x\in\mathcal{E}$) for all $|x|$ large enough. As for $H_n$ in~\eqref{HnE} in the first proof of Theorem~\ref{th1}, for the Cauchy problem~\eqref{homo} with the initial condition $\mathds{1}_{E_2}$, extinction occurs (that is,~$E_2\in\mathcal{E}$) for all $|x|$ large enough.

Now, observe that $\lambda(E_1)=\lambda(E_2)>0$ for all $|x|$ large enough, and
$$\delta_1(E_1)=1-\frac{\lambda(C_{a^*+\eta\varepsilon^*,R_E})}{\lambda(E_1)},$$
with
\be\label{defRE2}
\frac{a^*+\beta\epsilon^*}{2}<R_E:=\frac{a^*+\eta\varepsilon^*}{\omega_N^{1/N}}<\frac{a^*+\beta\epsilon^*}{\omega_N^{1/N}},
\ee
where the first inequality above holds because of~\eqref{defeta2}. Lastly, for $|x|$ large enough such that $C_{a^*+\beta\epsilon^*,r}\cap Q^x=\emptyset$, one has
\be\label{lambdaQx2}
\lambda(Q^x)<\frac{\omega_N{a^*}^N}{2^N}<\frac{\omega_N(a^*+\beta\epsilon^*)^N}{2^N}
\ee
by~\eqref{lambdaQx}. On the other hand, $R_E<r$ by~\eqref{defr} and~\eqref{defRE2}, and then
\be\label{rRE}
\lambda(C_{a^*+\beta\epsilon^*,r}\cap B_{R_E})=\lambda(C_{a^*+\beta\epsilon^*,R_E})>\frac{\omega_N(a^*+\beta\epsilon^*)^N}{2^N}
\ee
since $R_E>(a^*+\beta\epsilon^*)/2$ by~\eqref{defRE2}. For every $|x|$ large enough, and for every $y\in\R^N$, one has either $C_{a^*+\beta\epsilon^*,r}\cap B_{R_E}(y)=\emptyset$ or $Q^x\cap B_{R_E}(y)=\emptyset$. Hence
$$\max_{y\in\R^N}\lambda\big(E_2\cap B_{R_E}(y)\big)=\max\Big(\max_{y\in\R^N}\lambda\big(C_{a^*+\beta\epsilon^*,r}\cap B_{R_E}(y)\big),\max_{y\in\R^N}\lambda\big(Q^x\cap B_{R_E}(y)\big)\Big)$$
for all $|x|$ large enough. Since $\lambda(Q^x)<\omega_N(a^*+\beta\epsilon^*)^N/2^N<\lambda(C_{a^*+\beta\epsilon^*,r}\cap B_{R_E})$ for all $|x|$ large enough by~\eqref{lambdaQx2}-\eqref{rRE}, one infers that
$$\max_{y\in\R^N}\lambda\big(E_2\cap B_{R_E}(y)\big)=\max_{y\in\R^N}\lambda\big(C_{a^*+\beta\epsilon^*,r}\cap B_{R_E}(y)\big)$$
and then $\max_{y\in\R^N}\lambda\big(E_2\cap B_{R_E}(y)\big)=\lambda(C_{a^*+\beta\epsilon^*,r}\cap B_{R_E})=\lambda(C_{a^*+\beta\epsilon^*,R_E})$. Therefore,
$$\delta_1(E_2)=1-\frac{\lambda(C_{a^*+\beta\varepsilon^*,R_E})}{\lambda(E_1)}>0$$
for all $|x|$ large enough (notice that $\delta_1(E_2)>0$ because $E_2$ is not a ball up to a negligible set). Finally, since $a^*+\eta\epsilon^*<a^*+\beta\epsilon^*<2R_E$ by~\eqref{defRE2}, it follows that~$\lambda(C_{a^*+\eta\varepsilon^*,R_E})<\lambda(C_{a^*+\beta\varepsilon^*,R_E})$, hence
$$0<\delta_1(E_2)<\delta_1(E_1),$$
for all $|x|$ large enough. The alternate proof of Theorem~\ref{th1} in dimension $N\ge2$ is thereby complete. 
\end{proof}

%
%
%


\end{document}